\numberwithin{equation}{section}
\def\o{\overline}
\def\b{\bar}
\def\mr{\mathrm}
\def\n{\nabla}
\def\wt{\widetilde}
\def\End{\mathrm{End}}
\def\sgn{\mathrm{sgn}}
\def\tr{\mathrm{tr}}
\theoremstyle{plain}
\newtheorem{thm}{Theorem}[section]
\newtheorem{lemma}[thm]{Lemma}
\newtheorem{prop}[thm]{Proposition}
\theoremstyle{definition}
\newtheorem{rem}[thm]{Remark}
\newtheorem{que}[thm]{Question}
\newtheorem{pro}[thm]{Open problem}
\theoremstyle{definition}
\newtheorem{defn}[thm]{Definition}
\newcommand{\comment}[1]{}
\begin{document}
\begin{CJK}{UTF8}{gbsn}

\title{Positivity of the third Chern form for Griffiths positive vector bundles}

\author{Xueyuan Wan}

\address{Xueyuan Wan: Mathematical Science Research Center, Chongqing University of Technology, Chongqing 400054, China}
\email{xwan@cqut.edu.cn}

\begin{abstract}
In this paper, we prove the positivity of the double mixed discriminant associated with a positive linear map between spaces of third-order complex matrices, thereby settling the three-dimensional case of Finski's open problem. As an application, we obtain the weak positivity of the third Chern form for Griffiths positive vector bundles. Moreover, we show that all Schur forms are weakly positive for Griffiths positive vector bundles of rank three over complex threefolds. This yields a complete affirmative answer, in the case where both the rank and the dimension are three, to the question posed by Griffiths in 1969.
\end{abstract}

\subjclass[2020]{53C55, 32L05, 57R20}  
\keywords{Positivity,  Schur forms, the third Chern form, Griffiths positive vector bundle}
\thanks{Research of Xueyuan Wan is sponsored by the National Key R\&D Program of China (Grant No. 2024YFA1013200) and the Natural Science Foundation of Chongqing, China (Grant No. CSTB2024NSCQ-LZX0040, CSTB2023NSCQ-LZX0042).}

\maketitle
\section*{Introduction}

A central problem in complex differential geometry is to understand how \emph{curvature positivity} of a Hermitian holomorphic vector bundle constrains the \emph{positivity of its characteristic forms}.
Let \((E,h^E)\) be a Hermitian holomorphic vector bundle of rank \(r\) over a complex manifold \(X\) of complex dimension \(n\).
Its Chern forms \(c_i(E,h^E)\in A^{i,i}(X)\) (\(0\le i\le r\)) are defined by
\[
c(E,h^E):=\sum_{i=0}^r c_i(E,h^E)
:=\det\!\left(\mr{Id}_E+\frac{\sqrt{-1}}{2\pi}R^E\right),
\]
where \(R^E\in A^{1,1}(X,\mr{End}(E))\) denotes the Chern curvature of \((E,h^E)\), and \(c_i(E,h^E)\) is the \((i,i)\)-component of \(c(E,h^E)\).

Fix \(1\le k\le n\), and let \(\lambda=(\lambda_1,\dots,\lambda_r)\) be a partition of \(k\) (allowing trailing zeros) with
\[
r\ge \lambda_1\ge \cdots\ge \lambda_r\ge 0,
\qquad
|\lambda|:=\sum_{i=1}^r\lambda_i=k.
\]
To \(\lambda\) one associates the \emph{Schur form}
\[
P_\lambda\bigl(c(E,h^E)\bigr)
:=\det\bigl(c_{\lambda_i-i+j}(E,h^E)\bigr)_{1\le i,j\le r},
\]
a closed real \((k,k)\)-form (with the conventions \(c_0=1\) and \(c_m=0\) for \(m<0\) or \(m>r\)).
Schur forms simultaneously generalize Chern forms and (signed) Segre forms, and thus provide a unified family of characteristic forms naturally attached to \((E,h^E)\).
For instance, when \(r=3\) and \(k=3\), there are exactly three Schur forms:
\begin{equation}\label{eqn4}
   \begin{cases}
 P_{(1,1,1)}(c(E,h^E))	&=c_1(E,h^E)^{\wedge 3}-2c_1(E,h^E)\wedge c_2(E,h^E)+c_3(E,h^E), \\
  P_{(2,1,0)}(c(E,h^E))	&=c_1(E,h^E)\wedge c_2(E,h^E)-c_3(E,h^E),\\
   P_{(3,0,0)}(c(E,h^E))&=c_3(E,h^E).
 \end{cases}
\end{equation}

On the numerical side, Bloch--Gieseker~\cite{MR297773} proved numerical positivity of Chern classes for ample vector bundles, and Fulton--Lazarsfeld~\cite[Theorem~I]{FL} showed that \emph{all} Schur polynomials are numerically positive for ample vector bundles.
Griffiths proposed an analytic counterpart: for bundles with Griffiths positive curvature, the corresponding Schur forms should be (weakly) positive as differential forms.
A convenient formulation is the following; see \cite[p.~1541, Question of Griffiths]{Fin}.

\begin{que}[Griffiths~\cite{Griffiths}]\label{Question}
Let \(P\in \mathbb R[c_1,\dots,c_r]\) be a nonzero nonnegative linear combination of Schur polynomials of weighted degree \(k\).
Is the form \(P\bigl(c_1(E,h^E),\dots,c_r(E,h^E)\bigr)\) weakly positive for every Griffiths positive vector bundle \((E,h^E)\) over a complex manifold \(X\) of dimension \(n\ge k\)?
\end{que}

We recall that a real \((k,k)\)-form \(u\) is \emph{weakly positive} (resp.\ \emph{weakly nonnegative}) if
\[
u\wedge (\sqrt{-1})^{(n-k)^2}\beta\wedge \overline{\beta}>0
\quad (\text{resp. }\ge 0)
\]
for every nonzero decomposable \((n-k,0)\)-form \(\beta=\beta_1\wedge\cdots\wedge\beta_{n-k}\); see Definition~\ref{various positivity}.

Griffiths~\cite[p.~249]{Griffiths} proved positivity of the second Chern form for Griffiths positive bundles, and G\"uler~\cite[Theorem~1.1]{MR2932990} established Griffiths' question for all signed Segre forms.
Diverio--Fagioli~\cite{DF} obtained weak positivity for further polynomials in Chern forms via pushforward formulas on flag bundles; see also~\cite{Fag20,Fag22}.
For stronger curvature positivity notions, more is known:
Bott--Chern~\cite[Lemma~5.3,(5.5)]{MR185607} proved nonnegativity of all Chern forms under Bott--Chern nonnegativity, and Li~\cite[Proposition~3.1]{MR4263677} extended this to all Schur forms.
Finski showed that Bott--Chern nonnegativity is equivalent to dual Nakano nonnegativity~\cite[Theorem~2.15]{Fin}, proved nonnegativity of all Schur forms for Nakano nonnegative bundles by an algebraic argument~\cite[Section~3.4]{Fin}, and obtained positivity for (dual) Nakano positive bundles via a differential-form refinement of the Kempf--Laksov formula~\cite[Theorem~1.1]{Fin}.
More recently, the author~\cite{MR4686290} proved positivity of Schur forms under the (weaker) assumption of strong decomposable positivity.

A major insight of Finski~\cite[Theorem~1.3]{Fin} is that Griffiths' Question~\ref{Question} is equivalent to a structural positivity statement for \emph{positive linear maps} expressed in terms of mixed discriminants.
Let \(V\) be a complex vector space of dimension \(r\).
The mixed discriminant
\(
\mathrm{D}_V:\End(V)^{\otimes r}\to\mathbb C
\)
is the polarization of the determinant introduced by Alexandrov~\cite[Section~1]{MR1597}.
Using the natural identification \(\End(V)\simeq \End(V)^*\), let \(\mathrm{D}_V^*:\mathbb C\to \End(V)^{\otimes r}\) be the dual map.
Finski posed the following question and proved that it is equivalent to Griffiths' Question~\ref{Question}; see \cite[Theorem~1.3]{Fin}.
(For a fixed rank \(r=\mathrm{rank}(E)=\dim V\), however, Problem~\ref{pro1} and Question~\ref{Question} need not be equivalent.)

\begin{pro}[Finski~\cite{Fin}]\label{pro1}
Let \(H:\End(V)\to \End(W)\) be a positive linear map, and assume \(\dim V=\dim W\).
Is it true that
\[
\mathrm{D}_W\circ H^{\otimes \dim W}\circ \mathrm{D}_V^*
\in \mathbb R
\]
is positive?
\end{pro}

Problem~\ref{pro1} remains open in general, although it is known under additional assumptions; see, for instance, \cite[Proposition~4]{Fin}.
In particular, the case \(\dim V=\dim W=2\) admits an affirmative answer, and thus the first genuinely new case is \(\dim V=\dim W=3\).

In rank \(3\), Fagioli~\cite[Theorem~0.1]{Fag20} proved weak nonnegativity of
\[
P_{(2,1,0)}\bigl(c(E,h^E)\bigr)=c_1(E,h^E)\wedge c_2(E,h^E)-c_3(E,h^E)
\]
for Griffiths semipositive bundles, and established the inequalities
\[
c_1(E,h^E)^{\wedge 3}\ \ge\ c_1(E,h^E)\wedge c_2(E,h^E)\ \ge\ c_3(E,h^E).
\]
By twisting with a line bundle of sufficiently small negative curvature, this can be upgraded to weak positivity of \(P_{(2,1,0)}(c(E,h^E))\) for Griffiths positive bundles of rank three (see Proposition~\ref{prop:P210-positive}).
In addition, G\"uler~\cite[Theorem~1.1]{MR2932990} proved that all signed Segre forms are weakly positive; in rank three this yields weak positivity of the Schur forms corresponding to \((1,0,0)\), \((1,1,0)\), and \((1,1,1)\).
Finally, the second Chern form \(c_2(E,h^E)=P_{(2,0,0)}(c(E,h^E))\) is also weakly positive; see \cite{Griffiths,Fag20} and Proposition~\ref{prop:P210-positive}.
Consequently, when \(\mathrm{rank}(E)=\dim X=3\), Griffiths' Question~\ref{Question} reduces to proving weak positivity of the top Chern form \(c_3(E,h^E)=P_{(3,0,0)}(c(E,h^E))\).
Combining this reduction with \cite[Theorem~3.4 and Corollary~3.5]{Fin}, we conclude that, in the case \(r=\mathrm{rank}(E)=\dim V=3\) and \(\dim X=n=3\), Problem~\ref{pro1} and Question~\ref{Question} are equivalent.

\medskip

\noindent\textbf{Main results.}
In this paper we settle the first nontrivial case \(\dim V=\dim W=3\) of Finski's Problem~\ref{pro1} and, equivalently, give a complete affirmative answer to Griffiths' Question~\ref{Question} for Griffiths positive bundles of rank \(3\) on complex threefolds.
Write \(H(E_{i\bar j})=B_{i\bar j}\in \End(W)\) for the images of the matrix units.
Then the double mixed discriminant
\[
\Phi:=\mathrm{D}_W\circ H^{\otimes r}\circ \mathrm{D}_V^*(1)
\]
admits the concrete expression
\[
\Phi=\sum_{\sigma\in S_r}\sgn(\sigma)\,
D\bigl(B_{1\overline{\sigma(1)}},\dots,B_{r\overline{\sigma(r)}}\bigr),
\]
where \(D(\cdot)\) denotes the mixed discriminant on \(\End(W)\).
The positivity of \(H\) is equivalent to the Griffiths-type condition that
\begin{equation}\label{eqn1}
\sum_{i,j=1}^r B_{i\bar j}\xi^i\overline{\xi^j}\succ 0
\qquad\text{for every }0\ne \xi=(\xi^1,\dots,\xi^r)\in \mathbb C^r.
\end{equation}
Following Gurvits~\cite[Definition~4.2]{MR2087945}, we consider the operator scaling
\[
S_{C_1,C_2}(H)(X):=C_1\,H(C_2^*XC_2)\,C_1^*,
\]
where \(C_1\in \End(W)\) and \(C_2\in \End(V)\) are invertible.
This scaling preserves positivity and does not change the sign of \(\Phi\).
Moreover, by \cite[Theorem~4.7]{MR2087945}, one can choose \(C_1,C_2\) so that \(S_{C_1,C_2}(\frac{1}{r}H)\) becomes \emph{doubly stochastic}.
Consequently, without loss of generality we work under the normalization
\begin{equation}\label{normalization}
\sum_{i=1}^r B_{i\bar i}=rI,
\qquad
\tr(B_{i\bar j})=r\,\delta_{ij}.
\end{equation}
Under \eqref{eqn1} and \eqref{normalization}, Problem~\ref{pro1} reduces to proving \(\Phi>0\).

\begin{thm}\label{thm:main}
Let \(B_{i\bar j}\in M_3(\mathbb C)\), \(1\le i,j\le 3\), satisfy \eqref{eqn1} and \eqref{normalization}.
Then
\[
\Phi=\sum_{\sigma\in S_3}\sgn(\sigma)\,
D\bigl(B_{1\overline{\sigma(1)}},B_{2\overline{\sigma(2)}},B_{3\overline{\sigma(3)}}\bigr)>0.
\]
In particular, \(\mathrm{D}_W\circ H^{\otimes \dim W}\circ \mathrm{D}_V^*\in \mathbb R\) is positive if \(H:\End(V)\to \End(W)\) is a positive linear map and \(\dim V=\dim W=3\).
Moreover, all Schur forms are weakly positive for Griffiths positive vector bundles of rank three over complex threefolds.
\end{thm}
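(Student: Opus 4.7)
The plan is as follows. As recalled in the introduction, both the assertion on $\mathrm D_W\circ H^{\otimes 3}\circ \mathrm D_V^*$ and the assertion on weak positivity of all Schur forms in rank three on threefolds have already been reduced to the single inequality $\Phi>0$ under \eqref{eqn1} and \eqref{normalization}, so I focus exclusively on this inequality.

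First I would expand each of the six mixed discriminants $D(B_{1\bar{\sigma(1)}},B_{2\bar{\sigma(2)}},B_{3\bar{\sigma(3)}})$ via the standard polarization identity
\[
6\,D(A,B,C)=\tr(A)\tr(B)\tr(C)-\tr(A)\tr(BC)-\tr(B)\tr(CA)-\tr(C)\tr(AB)+\tr(ABC)+\tr(ACB).
\]
The trace normalization $\tr(B_{i\bar j})=3\delta_{ij}$ annihilates every summand that contains a single off-diagonal trace factor, while Hermitian preservation $B_{j\bar i}=B_{i\bar j}^*$ (a consequence of $H$ being positive) makes each remaining quantity real. Writing $B_{i\bar i}=I+A_i$ with $A_i$ Hermitian traceless and $A_1+A_2+A_3=0$ compresses $6\Phi$ to an explicit real polynomial in a short list of invariants: $\tr(A_iA_j)$, $\tr(A_1A_2A_3)$, the Hilbert--Schmidt norms $\tr(B_{i\bar j}B_{i\bar j}^*)$ and the mixed traces $\tr(A_kB_{i\bar j}B_{i\bar j}^*)$ for $i<j$, together with the cyclic cubic trace $\mathrm{Re}\,\tr(B_{1\bar 2}B_{2\bar 3}B_{3\bar 1})$. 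The baseline choice $B_{i\bar j}=\delta_{ij}I$ gives $\Phi=1$, so it suffices to show that this polynomial stays strictly above $0$ on the doubly stochastic Griffiths-positive locus.

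The main obstacle will be converting this polynomial into a manifestly nonnegative form using only the Griffiths condition \eqref{eqn1}, which is strictly weaker than the complete positivity that would allow one to identify $\Phi$ with the determinant of a $9\times 9$ positive-definite matrix. I expect two inputs to be jointly sufficient: (a) testing \eqref{eqn1} against a well-chosen family of vectors $\xi\in\mathbb C^3$, so that the resulting positive matrix $M(\xi):=\sum_{i,j}B_{i\bar j}\xi^i\overline{\xi^j}\succ 0$, when paired with eigenvectors of $A_k$ (or of auxiliary Hermitian matrices, after first applying a unitary on $W$ that preserves \eqref{normalization} to diagonalize a chosen $A_k$), yields scalar bounds that control the cubic cyclic trace, which is the only summand of ambiguous sign; and (b) the Alexandrov--Fenchel inequality $D(A,B,C)^2\ge D(A,A,C)\,D(B,B,C)$ for $C\succeq 0$, which supplies nontrivial constraints among the diagonal and mixed discriminants themselves. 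Assembling these inputs into a sum-of-squares presentation of $\Phi-1$ is the technical heart of the argument, and strictness of \eqref{eqn1} will deliver the strict inequality $\Phi>0$.

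Once $\Phi>0$ is established, the remaining claims are immediate: \cite[Theorem~1.3]{Fin} turns it into positivity of $\mathrm D_W\circ H^{\otimes 3}\circ \mathrm D_V^*$, and combining this with the weak positivity of the Schur forms indexed by $(1,0,0)$, $(1,1,0)$, $(1,1,1)$, $(2,0,0)$, $(2,1,0)$ already recorded in the introduction gives weak positivity of $c_3(E,h^E)=P_{(3,0,0)}(c(E,h^E))$ and hence of every Schur form for Griffiths positive rank-three bundles on complex threefolds.
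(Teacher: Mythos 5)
There is a genuine gap: you have sketched a plausible setup but explicitly left the central step unresolved. After expanding $\Phi$ via the trace identity and normalizing, you propose to produce a sum-of-squares certificate for $\Phi-1$ using Alexandrov--Fenchel together with scalar bounds extracted by testing \eqref{eqn1} against well-chosen $\xi$. You acknowledge that ``assembling these inputs into a sum-of-squares presentation \dots is the technical heart of the argument,'' and you do not carry it out. The cyclic cubic trace $\operatorname{Re}\tr(B_{1\bar 2}B_{2\bar 3}B_{3\bar 1})$ is precisely the term that makes a purely algebraic SOS argument hard to organize, and Alexandrov--Fenchel (which requires one argument to be positive semidefinite) gives inequalities among the six mixed discriminants but does not obviously control this term. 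Also note that your target inequality $\Phi\ge 1$ is stronger than what the statement requires and than what the paper establishes in rank three; it may or may not hold, but aiming at it is an unnecessary extra burden.

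The paper's proof goes by a genuinely different route that dissolves this difficulty. It uses the spherical moment identity (Lemma~\ref{lemma:trace-moment}, derived from the projector formula in Lemma~\ref{lemma1}) to rewrite $\Phi$ as an integral over the unit sphere $S^5\subset\mathbb C^3$ of a symmetric function of the Hermitian matrix $C(\xi)=(\xi^*B_{i\bar j}\xi)_{i,j}$: namely
\[
\Phi=\int_{S^5}\Bigl(10\det C(\xi)+27-12\,\sigma_2(C(\xi))\Bigr)\,d\mu(\xi),
\]
as in Lemma~\ref{lem:Phi-integral}. The Griffiths condition \eqref{eqn1} makes $C(\xi)\succ 0$, and \eqref{normalization} forces $\tr C(\xi)=3$, so the integrand is a symmetric polynomial in three positive eigenvalues summing to $3$. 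Schur's inequality (Lemma~\ref{lem:Schur}) then gives the pointwise bound $10\det C+27-12\sigma_2\ge\det C>0$, whence $\Phi>0$. The moment representation is the key missing ingredient in your proposal: it reduces a matrix-level positivity statement to a three-variable scalar inequality and bypasses the need to control the cyclic trace term directly. You may want to look for an identity of this type before attempting a sum-of-squares certificate on the raw trace polynomial.
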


\begin{rem}\label{rem01}
By \cite[Proposition~2.21]{Fin}, the case \(\dim V=\dim W=3\) is the first dimension in which a positive map is not necessarily decomposable in the sense of \cite[(2.40)]{Fin} (as shown by Choi's example~\cite{MR379365}).
Thus Theorem~\ref{thm:main} verifies Finski's open problem in the first genuinely new dimension and provides a strong indication that the general statement may also hold.
\end{rem}

In particular, Theorem~\ref{thm:main} implies that the third Chern form \(c_3(E,h^E)\) is weakly positive when \((E,h^E)\to X\) is Griffiths positive and \(\mathrm{rank}(E)=\dim X=3\).
We next extend this conclusion to arbitrary rank \(r\ge 3\) on any complex manifold \(X\) with \(\dim X\ge 3\).
Indeed, \(c_3(E,h^E)\) is the sum of all \(3\times 3\) principal minors of the Chern curvature matrix \((R^E)^i_{\,j}\):
\begin{equation}\label{eqn6}
c_3(E,h^E)
=\left(\tfrac{\sqrt{-1}}{2\pi}\right)^3
\sum_{1\le i_1<i_2<i_3\le r}
\det\bigl((R^E)^i_{\,j}\bigr)_{i,j\in\{i_1,i_2,i_3\}}.
\end{equation}
Moreover, weak positivity of a real \((p,p)\)-form can be tested by restricting to complex \(p\)-dimensional subspaces; see, for example, \cite[Proposition~1.9]{Fag20}.
Hence we may assume \(\dim X=3\).
Restricting \((E,h^E)\) to the rank-three subbundle spanned (locally) by \(\{e_{i_1},e_{i_2},e_{i_3}\}\) preserves Griffiths positivity, and applying Theorem~\ref{thm:main} shows that each summand in \eqref{eqn6} is weakly positive.
Therefore their sum \(c_3(E,h^E)\) is weakly positive.
We thus obtain:

\begin{thm}\label{thm11}
Let \((E,h^E)\) be a Griffiths positive vector bundle of rank \(r\ge 3\) over a complex manifold \(X\) with \(\dim X\ge 3\).
Then the third Chern form \(c_3(E,h^E)\) is weakly positive.
\end{thm}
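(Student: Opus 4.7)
The plan is to reduce to $\dim X=3$ and then apply Theorem \ref{thm:main} pointwise to each $3\times 3$ principal minor of the curvature matrix appearing in \eqref{eqn6}.

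\emph{Reduction of the ambient dimension.} Weak positivity of a $(p,p)$-form can be tested by restricting to arbitrary complex $p$-dimensional linear subspaces of the tangent space; see \cite[Proposition~1.9]{Fag20}. Taking $p=3$, it suffices to show that at every $x\in X$ and every three-dimensional subspace $T'\subset T_xX$, the restriction $c_3(E,h^E)|_{T'}$ is a positive $(3,3)$-form. Since this is a pointwise assertion, we may assume $\dim X=3$ for the rest of the argument.

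\emph{Pointwise application of Theorem \ref{thm:main}.} Fix $x\in X$, a basis $\{e_1,\dots,e_r\}$ of $E_x$, and a three-element subset $I=\{i_1<i_2<i_3\}$; put $V_I:=\mathrm{span}(e_{i_1},e_{i_2},e_{i_3})$, $W:=T_xX$, and $\Delta_I:=\det\bigl((R^E)^i_{\,j}\bigr)_{i,j\in I}$ (wedge-product determinant). Then \eqref{eqn6} reads $c_3(E,h^E)_x=\bigl(\tfrac{\sqrt{-1}}{2\pi}\bigr)^3\sum_{|I|=3}\Delta_I$. The $I\times I$ sub-block of curvature coefficients defines a linear map $H_I:\End(V_I)\to\End(W)$ satisfying \eqref{eqn1}: for any $0\ne\xi=(\xi^i)_{i\in I}\in\mathbb C^3$, extending $\xi$ by zero yields a nonzero vector in $E_x$, and Griffiths positivity of $(E,h^E)$ gives
\[
\sum_{i,j\in I}(R^E)^i_{\,j}(\eta,\bar\eta)\,\xi^i\overline{\xi^j}>0\quad\text{for every }0\ne\eta\in T_xX.
\]
Finski's dictionary \cite[Theorem~3.4]{Fin} identifies $\Delta_I$, paired with the volume form on $T_xX$, with the double mixed discriminant $\Phi_I$ of $H_I$ up to a positive constant, so Theorem \ref{thm:main} yields $\Phi_I>0$. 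Each $\Delta_I$ is therefore a positive $(3,3)$-form at $x$, and summing over $I$ gives $c_3(E,h^E)_x>0$. Since $x$ was arbitrary, $c_3(E,h^E)$ is weakly positive on $X$.

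The substance of the argument lies entirely in Theorem \ref{thm:main}; the rest is bookkeeping. The one point that demands a careful reading is that each $\Delta_I$ must be interpreted as the wedge-product determinant of a sub-block of $R^E$ at the single point $x$, not as the Chern curvature of a geometric subbundle spanned by $\{e_{i_1},e_{i_2},e_{i_3}\}$ (which would differ by a second-fundamental-form contribution). Because $c_3(E,h^E)$ is computed directly from the full curvature $R^E$ through \eqref{eqn6}, this distinction causes no difficulty.
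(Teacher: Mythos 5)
Your proof is correct and follows essentially the same route as the paper: reduce to \(\dim X=3\) via \cite[Proposition~1.9]{Fag20}, use the principal-minor expansion \eqref{eqn6}, and apply Theorem~\ref{thm:main} pointwise to each \(3\times 3\) block. The one place you diverge is in how positivity of each block is justified. The paper phrases it geometrically: restrict \((E,h^E)\) to the rank-three holomorphic subbundle \(S_I\) spanned locally by \(\{e_{i_1},e_{i_2},e_{i_3}\}\), observe that \(S_I\) inherits Griffiths positivity, and apply Theorem~\ref{thm:main} to \(c_3(S_I)\). You instead treat the \(I\times I\) sub-block of \(R^E\) at a fixed point as a positive linear map \(H_I:\End(V_I)\to\End(T_xX)\) and pass through Finski's dictionary. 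Your closing remark identifies a genuine imprecision in the paper's wording: the summand \(\det\bigl((R^E)^i_{\,j}\bigr)_{i,j\in I}\) in \eqref{eqn6} agrees with \(c_3(S_I,h^E|_{S_I})\) at \(x\) only if the second fundamental form of \(S_I\subset E\) vanishes at \(x\) (since \(R^{S_I}=R^E|_{S_I}-\beta^*\wedge\beta\)); this does hold once one chooses a normal holomorphic frame for \(E\) at \(x\), but the paper does not say so. Your direct sub-block formulation sidesteps this entirely, so the two arguments have the same substance but yours is the more carefully stated of the two.
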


\begin{rem}\label{rem1}
Our result yields an elementary proof of the following fact: if \(X\) is a compact complex threefold admitting a Hermitian metric with negative holomorphic bisectional curvature, then its Euler characteristic is negative.
Indeed,
\[
\chi(X)=\int_X c_3(T_X)=-\int_X c_3(T_X^*),
\]
and negative holomorphic bisectional curvature implies that the cotangent bundle \(T_X^*\) is Griffiths positive.
Hence \(c_3(T_X^*)\) is weakly positive, so \(\int_X c_3(T_X^*)>0\), and therefore \(\chi(X)<0\).
\end{rem}

\noindent\textbf{Proof idea and organization of the paper}
\paragraph{{\it Proof idea.}}
The proof of Theorem~\ref{thm:main} combines a normalization-by-scaling step with a geometric positivity argument for a mixed-discriminant expression.
We first use operator scaling to reduce to the doubly stochastic normalization~\eqref{normalization}, which preserves positivity and the sign of \(\Phi\).
Under this normalization, \(\Phi\) can be rewritten in terms of mixed discriminants of explicit third-order Hermitian matrices associated to \((B_{i\bar j})\).
Introducing \(C(\xi)=\sum_{i,j}B_{i\bar j}\,\xi^i\overline{\xi^j}\), a spherical moment identity expresses the relevant mixed-discriminant combinations as an average over \(\xi\in S^{5}\) of a symmetric polynomial in \(C(\xi)\).
Diagonalizing \(C(\xi)\) reduces the integrand to an inequality for symmetric polynomials of the eigenvalues of \(C(\xi)\), which follows from a sharp Schur inequality.
This yields \(\Phi>0\).
Finally, the equivalence of Finski's problem with Griffiths' question in the \((3,3)\)-case translates \(\Phi>0\) into weak positivity of all degree-\(3\) Schur forms, and Theorem~\ref{thm11} follows by restriction to rank-\(3\) subbundles and the principal-minor formula~\eqref{eqn6}.

\paragraph{{\it Organization of the paper.}}
In Section~\ref{sec1} we recall weak positivity of differential forms and Griffiths positivity for Hermitian holomorphic vector bundles.
In Section~\ref{sec2} we reformulate Finski's open problem as Open Problem~\ref{problem2} and solve it for \(r=3\); in particular, we prove Theorem~\ref{thm:main}.

\textbf{Acknowledgments.}
The author is grateful to Siarhei Finski for helpful discussions concerning Remark~\ref{rem01}, and to Vamsi Pritham Pingali for helpful discussions related to Theorem~\ref{thm11} and Remark~\ref{rem1}.

\section{Positivity of differential forms and vector bundles}\label{sec1}

In this section we recall the notions of positivity for differential forms and the definition of Griffiths positivity for Hermitian holomorphic vector bundles.

\subsection{Positivity of differential forms}
We first review several positivity notions for differential forms. For further background, see \cite[Section~1.1]{Fag20} and \cite{RA,Fin}.

Let \(V\) be a complex vector space of dimension \(n\), and let \(\{e_1,\dots,e_n\}\) be a basis of \(V\). Denote by \(\{e^1,\dots,e^n\}\) the dual basis of \(V^*\). Let \(\Lambda^{p,q}V^*\) be the space of \((p,q)\)-forms, and let \(\Lambda^{p,p}_{\mathbb R}V^*\subset \Lambda^{p,p}V^*\) be the subspace of real \((p,p)\)-forms.

\begin{defn}\label{various positivity}
A form \(\nu\in \Lambda^{n,n}V^*\) is called a non-negative (resp.\ positive) volume form if
\[
\nu=\tau\,(\sqrt{-1})^n\,e^1\wedge \overline{e^1}\wedge \cdots \wedge e^n\wedge \overline{e^n}
\]
for some \(\tau\in\mathbb R\) with \(\tau\ge 0\) (resp.\ \(\tau>0\)).
\end{defn}

Set \(q=n-p\). A \((q,0)\)-form \(\beta\) is called \emph{decomposable} if \(\beta=\beta_1\wedge\cdots\wedge \beta_q\) for some \(\beta_1,\dots,\beta_q\in V^*\).

\begin{defn}\label{defn1}
A real \((p,p)\)-form \(u\in \Lambda^{p,p}_{\mathbb R}V^*\) is called
\begin{itemize}
  \item \emph{weakly non-negative} (resp.\ \emph{weakly positive}) if for every nonzero decomposable \(\beta\in \Lambda^{q,0}V^*\), the form
  \[
  u\wedge (\sqrt{-1})^{q^2}\beta\wedge \overline{\beta}
  \]
  is a non-negative (resp.\ positive) volume form;
  \item \emph{non-negative} (resp.\ \emph{positive}) if for every nonzero \(\beta\in \Lambda^{q,0}V^*\), the form
  \[
  u\wedge (\sqrt{-1})^{q^2}\beta\wedge \overline{\beta}
  \]
  is a non-negative (resp.\ positive) volume form;
  \item \emph{strongly non-negative} (resp.\ \emph{strongly positive}) if there exist decomposable forms \(\alpha_1,\dots,\alpha_N\in \Lambda^{p,0}V^*\) such that
  \[
  u=\sum_{s=1}^N (\sqrt{-1})^{p^2}\alpha_s\wedge \overline{\alpha_s}.
  \]
\end{itemize}
\end{defn}

\begin{rem}
Let \(\mathrm{WP}^pV^*\), \(\mathrm{P}^pV^*\), and \(\mathrm{SP}^pV^*\) denote the closed convex cones in \(\Lambda^{p,p}_{\mathbb R}V^*\) generated by weakly non-negative, non-negative, and strongly non-negative forms, respectively. Then
\[
\mathrm{SP}^pV^*\subseteq \mathrm{P}^pV^*\subseteq \mathrm{WP}^pV^*.
\]
These inclusions are equalities for \(p=0,1,n-1,n\), and they are strict for \(2\le p\le n-2\); see, for instance, \cite[Remarks~1.7 and~1.8]{Fag20} and \cite{RA}.
\end{rem}

Let \(X\) be a complex manifold of dimension \(n\), and let \(A^{p,q}(X)\) denote the space of smooth \((p,q)\)-forms on \(X\).

\begin{defn}
A real \((p,p)\)-form \(\alpha\in A^{p,p}(X)\) is called weakly non-negative (resp.\ weakly positive), non-negative (resp.\ positive), or strongly non-negative (resp.\ strongly positive) if for every \(x\in X\), the form
\[
\alpha_x\in \Lambda^{p,p}_{\mathbb R}(T_x^{1,0}X)^*
\]
is weakly non-negative (resp.\ weakly positive), non-negative (resp.\ positive), or strongly non-negative (resp.\ strongly positive), respectively.
\end{defn}

\subsection{Griffiths positive vector bundles}
We next recall the Chern connection and its curvature for a Hermitian holomorphic vector bundle; see \cite[Chapter~1]{Kobayashi+1987} for details. Throughout we use the Einstein summation convention.

Let \(\pi:(E,h^E)\to X\) be a Hermitian holomorphic vector bundle over a complex manifold \(X\), where \(\mr{rank}\,E=r\) and \(\dim X=n\). The Chern connection \(\n^E\) of \((E,h^E)\) preserves \(h^E\) and is of type \((1,0)\). Given a local holomorphic frame \(\{e_i\}_{1\le i\le r}\) of \(E\), the connection satisfies
\[
\n^E e_i=\theta_i^{\,j}\,e_j,
\]
where \(\theta=(\theta_i^{\,j})\) is the connection \(1\)-form. More explicitly,
\[
\theta_i^{\,j}=\partial h_{i\bar k}\,h^{\bar k j},
\qquad
h_{i\bar k}:=h^E(e_i,e_k).
\]

The Chern curvature \(R^E=(\n^E)^2\in A^{1,1}(X,\mr{End}(E))\) can be written as
\[
R^E=R_i^{\,j}\,e_j\otimes e^i,
\]
where \(R=(R_i^{\,j})\) is the curvature matrix with entries in \(A^{1,1}(X)\), and \(\{e^i\}_{1\le i\le r}\) is the dual frame. One has
\[
R_i^{\,j}=d\theta_i^{\,j}+\theta_i^{\,k}\wedge \theta_k^{\,j}
=\bar\partial\,\theta_i^{\,j}.
\]

Let \(\{z^\alpha\}_{1\le \alpha\le n}\) be local holomorphic coordinates on \(X\). Then
\[
R_i^{\,j}=R_{i\alpha\bar\beta}^{\,j}\,dz^\alpha\wedge d\bar z^\beta,
\]
and, lowering an index using the metric,
\[
R_{i\bar j}:=R_i^{\,k}\,h_{k\bar j}
=R_{i\bar j\alpha\bar\beta}\,dz^\alpha\wedge d\bar z^\beta.
\]
In terms of the local metric coefficients, we have
\[
R_{i\bar j\alpha\bar\beta}
=R_{i\alpha\bar\beta}^{\,k}\,h_{k\bar j}
=-\partial_\alpha\partial_{\bar\beta}h_{i\bar j}
+h^{\bar\ell k}\,(\partial_\alpha h_{i\bar\ell})(\partial_{\bar\beta}h_{k\bar j}),
\]
where \(\partial_\alpha:=\partial/\partial z^\alpha\) and \(\partial_{\bar\beta}:=\partial/\partial \bar z^\beta\).

\begin{defn}\label{G-N}
A Hermitian holomorphic vector bundle \(\pi:(E,h^E)\to X\) is called \emph{Griffiths positive} (resp.\ \emph{Griffiths negative}) if
\[
R_{i\bar j\alpha\bar\beta}\,v^i\overline{v^j}\,\xi^\alpha\overline{\xi^\beta}>0
\quad(\text{resp. }<0)
\]
for all nonzero \(v=v^ie_i\in E|_z\) and \(\xi=\xi^\alpha \frac{\partial}{\partial z^\alpha}\in T^{1,0}_zX\), at every point \(z\in X\).
\end{defn}

\section{Positivity of Schur forms}\label{sec2}

In this section, we reformulate Finski's open problem in a more concrete form. We then prove that for Griffiths positive vector bundles of ranks two and three, all Schur forms are weakly positive.

\subsection{Finski's open problem}\label{Sec: Fin-open}

In order to study Griffiths' Question~\ref{Question}, Finski proposed a new open problem and proved that it is equivalent to Griffiths' question.

We now state Finski's open problem. Let \(V\) be a complex vector space with \(\dim V=r\). The mixed discriminant
\[
\mathrm{D}_V:\End(V)^{\otimes r}\longrightarrow \mathbb C
\]
was introduced by Alexandrov \cite[Section~1]{MR1597} as the polarization of the determinant. In coordinates, for matrices \(A^i=(a^i_{k\b{l}})_{k,l=1}^r\) (\(i=1,\dots,r\)), one has
\begin{equation}\label{mixed}
\mathrm{D}_V(A^1,\dots,A^r)
=\frac{1}{r!}\frac{\partial^r}{\partial t^1\cdots \partial t^r}\det\bigl(t^1A^1+\cdots+t^rA^r\bigr)
=\frac{1}{r!}\sum_{\sigma\in S_r}\det\bigl(a^{\sigma(i)}_{i\b{k}}\bigr)_{i,k=1}^r,
\end{equation}
where \(S_r\) is the symmetric group on \(\{1,\dots,r\}\).

Using the natural duality \(\mr{End}(V)\simeq \mr{End}(V)^*\), we denote by
\[
\mathrm{D}_V^*:\mathbb C\longrightarrow \mr{End}(V)^{\otimes r}
\]
the dual map of \(\mathrm{D}_V\).
\begin{defn}\label{positive map}
Given complex vector spaces \(V\) and \(W\) with \(\dim V=\dim W\), we say that a linear map
\(H:\End(V)\to \End(W)\) is \emph{non-negative} (resp.\ \emph{positive}) if it sends every nonzero positive semidefinite endomorphism of \(V\) to a positive semidefinite (resp.\ positive definite) endomorphism of \(W\).
\end{defn}

\begin{pro}[Finski \cite{Fin}]\label{problem1}
Let \(H:\End(V)\to \End(W)\) be positive, and assume \(\dim V=\dim W\).
Is it true that 
\[
\mathrm{D}_W\circ H^{\otimes \dim W}\circ \mathrm{D}_V^*\in \mathbb R
\]
 positive?
\end{pro}

Moreover, Finski proved that Open Problem~\ref{problem1} is equivalent to Griffiths' Question~\ref{Question}.

\begin{thm}[{\cite[Theorem~1.3]{Fin}}]\label{thm1}
The answers to Open Problem~\ref{problem1} and Griffiths' Question~\ref{Question} coincide.
\end{thm}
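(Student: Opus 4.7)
I would prove the two implications separately by setting up a dictionary between Griffiths positive bundle germs and positive linear maps $H:\End(V)\to \End(W)$. Given $(E,h^E)\to X$ with $\dim X=n$, $\mathrm{rank}(E)=r$, and a point $x\in X$, the Chern curvature induces a linear map $H_x:\End(T^{1,0}_xX)\to \End(E_x)$ by $H_x(A)_{i\bar j}:=R_{i\bar j\alpha\bar\beta}\,A^{\alpha\bar\beta}$. Griffiths positivity at $x$ is exactly the statement that $H_x(\xi\otimes\bar\xi)\succ 0$ for every nonzero $\xi\in T^{1,0}_xX$, which by linearity and Definition~\ref{positive map} is equivalent to $H_x$ being a positive linear map. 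Conversely, every positive linear map $H:\End(V)\to \End(W)$ is realized as $H_0$ for some Griffiths positive germ: on a polydisk around $0\in V$ take the trivial bundle of rank $\dim W$ with Hermitian metric
\[
h_{i\bar j}(z)\;=\;\delta_{ij}\;-\;H(E_{\alpha\bar\beta})_{i\bar j}\,z^\alpha\overline{z^\beta}\;+\;O(|z|^3),
\]
so that the curvature at the origin reproduces $H$; shrinking the polydisk preserves Griffiths positivity.

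\textbf{Forward direction.} Assume Open Problem~\ref{problem1} has an affirmative answer in every dimension. To verify weak positivity of $P_\lambda(c(E,h^E))$ at a point $x$ against a nonzero decomposable $(n-k,0)$-form $\beta=\beta_1\wedge\cdots\wedge\beta_{n-k}$, I note that the common kernel of $\beta_1,\dots,\beta_{n-k}$ in $T^{1,0}_xX$ has complex dimension $k$. Since weak positivity can be checked on every $k$-dimensional complex subspace, I would restrict to a submanifold tangent to this kernel and so reduce to $\dim X=k$. In that situation the Schur form $P_\lambda(c(E,h^E))$ is a top form, and the Jacobi--Trudi expansion combined with the polarization formula \eqref{mixed} rewrites its value at $x$ as a sum of permuted mixed discriminants of the curvature tensor. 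The resulting scalar should be (up to positive normalization) precisely $\mathrm{D}_W\circ H_x^{\otimes \dim W}\circ \mathrm{D}_V^*(1)$, possibly after passing to an auxiliary flag bundle pushforward that collapses the Jacobi--Trudi sum to a pure mixed discriminant, at which point Open Problem~\ref{problem1} delivers the positivity.

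\textbf{Converse direction.} Conversely, assume Griffiths' Question~\ref{Question} holds. Given a positive linear map $H:\End(V)\to \End(W)$ with $\dim V=\dim W=r$, realize $H=H_0$ as the curvature of a Griffiths positive germ $(E,h^E)$ on a polydisk in $V$ as above, with $n=\dim X=r$. The top Chern form $c_r(E,h^E)$ is (a nonnegative combination involving) the Schur form $P_{(1,1,\dots,1)}(c(E,h^E))$, whose weak positivity is granted by Griffiths' Question. Pairing with the canonical volume element at $0$ and applying the same mixed-discriminant identity in reverse yields $\mathrm{D}_W\circ H^{\otimes r}\circ \mathrm{D}_V^*(1)>0$.

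\textbf{Main obstacle.} The crux of both directions is the algebraic identity linking Schur polynomials of Chern forms to mixed-discriminant expressions. For the top Chern form this identity is essentially Alexandrov's polarization formula, but for a general Schur polynomial $P_\lambda$ one must either work out the Jacobi--Trudi combinatorics carefully or, more cleanly, reduce to the Chern case via pushforwards along flag bundles in the style of \cite{DF,Fag20}. A secondary subtlety is ensuring that the passage between differential forms and algebraic scalars respects \emph{weak} positivity (tested only against decomposable $\beta$), which is precisely why the geometric side contracts against the subspace cut out by $\beta$ and the algebraic side imposes $\dim V=\dim W$.
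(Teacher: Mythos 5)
The proposal captures the correct high-level dictionary — realizing a positive linear map as the curvature germ of a Griffiths positive bundle and conversely — and the converse direction is essentially sound modulo a slip: the top Chern form is $c_r(E,h^E)=P_{(r,0,\dots,0)}(c(E,h^E))$, not $P_{(1,1,\dots,1)}$, which is instead the signed top Segre form in \eqref{eqn4}. Since $c_r$ is itself a Schur form, the conclusion $\Phi>0$ still follows from testing $c_r$ at the origin of the polydisk, so this does not break the converse.

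The genuine gap is in the forward direction. After restricting to a $k$-dimensional submanifold, you claim that Jacobi--Trudi plus the polarization formula \eqref{mixed} rewrites $P_\lambda(c(E,h^E))_x$, "up to positive normalization," as $\mathrm{D}_W\circ H_x^{\otimes\dim W}\circ\mathrm{D}_V^*(1)$ with $H_x:\End(T^{1,0}_xX)\to\End(E_x)$. This expression only parses when $\dim V=\dim W$, i.e.\ $k=\mathrm{rank}(E)$, and even then it recovers only $c_r$. For $|\lambda|\ne r$, or for general $\lambda$ with $|\lambda|=r$ other than $(r,0,\dots,0)$, there is no such direct identification, and the Jacobi--Trudi determinant does not split into terms of one sign. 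Finski's argument circumvents this precisely through the pushforward identity
\[
P_\lambda\bigl(c(E,h^E)\bigr)=\pi_*^\lambda\!\left[c_{\mathrm{rk}(Q)}(Q,h^Q)\right],
\qquad
Q:=\frac{\pi^*\mathrm{Hom}(V_X,E)\oplus\mathcal O}{\mathcal O_{\mathbb P_{\mathrm{Hom}}}(-1)}
\ \text{on}\ \mathbb P_{\mathrm{Hom}}:=\mathbb P\bigl(\mathrm{Hom}(V_X,E)\oplus\mathcal O\bigr),
\]
which expresses \emph{every} Schur form as (the pushforward of) the \emph{top} Chern form of a single auxiliary quotient bundle $Q$. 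The whole point of this specific construction is that the curvature of $Q$ yields a positive linear map with equal source and target dimensions, so that $\mathrm{D}_W\circ H^{\otimes\dim W}\circ\mathrm{D}_V^*$ is the exact object governing pointwise positivity. Your suggested substitute — "flag bundle pushforwards in the style of \cite{DF,Fag20}" — does not furnish this dimension-matching and therefore does not produce the operator appearing in Open Problem~\ref{problem1}. Without the explicit $Q$-bundle construction, the forward implication remains unproved.
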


A key step in Finski's argument is a pushforward identity for characteristic forms, which refines the Kempf--Laksov determinantal formula for holomorphic vector bundles at the level of differential forms.
More precisely, he established the pushforward formula
\[
P_\lambda\bigl(c(E,h^E)\bigr)=\pi_*^\lambda\!\left[c_{\mathrm{rk}(Q)}(Q,h^Q)\right],
\]
where
\[
Q:=\bigl(\pi^*\mathrm{Hom}(V_X,E)\oplus \mathcal O\bigr)\big/\mathcal O_{\mathbb P_{\mathrm{Hom}}}(-1)
\quad\text{on}\quad
\mathbb P_{\mathrm{Hom}}:=\mathbb P\bigl(\mathrm{Hom}(V_X,E)\oplus \mathcal O\bigr).
\]
Consequently, the weak positivity of Schur forms is reduced to the positivity of the top Chern form of the quotient bundle \(Q\).
Expanding this top Chern form, one finds that its coefficients are precisely governed by the operator
\(\mathrm{D}_W\circ H^{\otimes \dim W}\circ \mathrm{D}_V^*\).

Now we study Finski's open problem, Problem~\ref{problem1}. By the definition of the mixed discriminant \eqref{mixed}, we have
\begin{align*}
\mathrm{D}_V(A^1,\dots,A^r)
&=\frac{1}{r!}\sum_{\sigma,\tau\in S_r}\sgn(\sigma)\sgn(\tau)\prod_{i=1}^r a^i_{\sigma(i)\overline{\tau(i)}} \\
&=\Bigl\langle A^1\otimes \cdots \otimes A^r,\;
\frac{1}{r!}\sum_{\sigma,\tau\in S_r}\sgn(\sigma)\sgn(\tau)\bigotimes_{i=1}^r E_{\sigma(i)\overline{\tau(i)}} \Bigr\rangle,
\end{align*}
where \(E_{i\bar j}\) denotes the \(r\times r\) matrix whose \((i,j)\)-entry is \(1\) and whose other entries are \(0\), and \(\langle\cdot,\cdot\rangle\) is the natural pairing on \(\End(V)^{\otimes r}\). It follows that
\[
\mathrm{D}_V^*(1)=\frac{1}{r!}\sum_{\sigma,\tau\in S_r}\sgn(\sigma)\sgn(\tau)\bigotimes_{i=1}^r E_{\sigma(i)\overline{\tau(i)}}.
\]

Let \(H:\End(V)\to \End(W)\) be a positive linear map, and write
\[
H(E_{i\bar j})=B_{i\bar j}\in \End(W).
\]
For a nonzero vector \(\xi=(\xi^1,\cdots,\xi^r)^\top\in \mathbb C^r\), consider the rank-one positive semidefinite matrix
\[
A=\xi\xi^*=\sum_{i,j=1}^r \xi^i\overline{\xi^j}\,E_{i\bar j}.
\]
Since \(H\) is positive, $H(A)$ is positive definite, that is,  
\[
H(A)=\sum_{i,j=1}^r \xi^i\overline{\xi^j}\,H(E_{i\bar j})
=\sum_{i,j=1}^r \xi^i\overline{\xi^j}\,B_{i\bar j}\succ 0.
\]
In particular, each \(B_{i\bar j}\) satisfies the Hermitian symmetry \(B_{i\bar j}^*=B_{j\bar i}\).

Conversely, assume that \(H(\xi\xi^*)\) is positive definite for every nonzero vector \(\xi\in\mathbb C^r\).
Let \(X\in M_r(\mathbb C)\) be a nonzero positive semidefinite matrix. Choose vectors \(\xi_1,\dots,\xi_r\in\mathbb C^r\), not all zero, such that
\[
X=(\xi_1,\dots,\xi_r)^{\top}\,(\overline{\xi}_1,\dots,\overline{\xi}_r)
=\sum_{i,j=1}^r (\xi_i^{\top}\overline{\xi}_j)\,E_{i\bar j}
=\sum_{i,j,k=1}^r \xi_i^{k}\,\overline{\xi_j^{k}}\,E_{i\bar j}.
\]
Since \(X\neq 0\), there exists an index \(k\) such that the \(k\)-th row \((\xi_1^{k},\dots,\xi_r^{k})\) is not the zero vector. Therefore,
\[
H(X)
=\sum_{i,j,k=1}^r \xi_i^{k}\,\overline{\xi_j^{k}}\,H(E_{i\bar j})
=\sum_{k=1}^r \left(\sum_{i,j=1}^r \xi_i^{k}\,\overline{\xi_j^{k}}\,B_{i\bar j}\right)\succ 0,
\]
where we write \(B_{i\bar j}:=H(E_{i\bar j})\). In particular, \(H(X)\) is positive definite for every nonzero positive semidefinite \(X\), i.e.\ \(H\) is positive.

We now set
\begin{align}\label{eq:defPhi}
\begin{split}
\Phi
&:=\mathrm{D}_W\circ H^{\otimes r}\circ \mathrm{D}_V^*(1) \nonumber\\
&=\frac{1}{r!}\mathrm{D}_W\!\left(\sum_{\sigma,\tau\in S_r}\sgn(\sigma)\sgn(\tau)\bigotimes_{i=1}^r B_{\sigma(i)\overline{\tau(i)}}\right) \nonumber\\
&=\frac{1}{r!}\sum_{\sigma,\tau\in S_r}\sgn(\sigma)\sgn(\tau)\,
D\bigl(B_{\sigma(1)\overline{\tau(1)}},\dots,B_{\sigma(r)\overline{\tau(r)}}\bigr)\\
&=\sum_{\sigma\in S_r}\sgn(\sigma)\,
D\bigl(B_{1\overline{\sigma(1)}},\dots,B_{r\overline{\sigma(r)}}\bigr).
\end{split}
\end{align}

Next, denote by
\[
\theta_{i\bar j,p\bar q}:=(B_{i\bar j})_{p\bar q}
\]
the \((p,q)\)-entry of the matrix \(B_{i\bar j}\). We may form the two families of matrices
\[
A_{p\bar q}:=(\theta_{i\bar j,p\bar q})_{i\bar j}\in \End(V),
\qquad
B_{i\bar j}:=(\theta_{i\bar j,p\bar q})_{p\bar q}\in \End(W).
\]
In terms of \(\theta\), the quantity \(\Phi\) can be rewritten as
\begin{align*}
\Phi
&=\frac{1}{r!}\sum_{\sigma,\tau\in S_r}\sgn(\sigma)\sgn(\tau)\,
D\bigl(B_{\sigma(1)\overline{\tau(1)}},\dots,B_{\sigma(r)\overline{\tau(r)}}\bigr)\\
&=\frac{1}{(r!)^2}\sum_{\sigma,\tau,\gamma,\pi\in S_r}\sgn(\sigma)\sgn(\tau)\sgn(\gamma)\sgn(\pi)
\prod_{i=1}^r \theta_{\sigma(i)\overline{\tau(i)},\,\gamma(i)\overline{\pi(i)}}\\
&=\frac{1}{r!}\sum_{\gamma,\pi\in S_r}\sgn(\gamma)\sgn(\pi)\,
D\bigl(A_{\gamma(1)\overline{\pi(1)}},\dots,A_{\gamma(r)\overline{\pi(r)}}\bigr).
\end{align*}

Following \cite[Definition~4.2]{MR2087945}, Gurvits introduced the operator scaling of \(H\) by
\begin{equation}\label{scaling}
S_{C_1,C_2}(H)(X):=C_1\,H(C_2^*XC_2)\,C_1^*,
\end{equation}
where \(C_1\in \End(W)\) and \(C_2\in \End(V)\) are invertible matrices. Define
\[
\widetilde{\Phi}
:=\mathrm{D}_W\circ \bigl(S_{C_1,C_2}(H)\bigr)^{\otimes r}\circ \mathrm{D}_V^*(1).
\]
By multilinearity of the mixed discriminant and its covariance under conjugation (see, e.g., \cite[Fact~1]{MR2200852}), we obtain
\begin{align*} \begin{split}
 \wt{\Phi}: &=\frac{1}{r!}\sum_{\sigma,\tau\in S_r}\mathrm{sgn}(\sigma)\mathrm{sgn}(\tau)D(C_1 H(C_2^*E_{\sigma(1)\o{\tau(1)}}C_2)C_1^*,\cdots, C_1 H(C_2^*E_{\sigma(r)\o{\tau(r)}}C_2)C_1^*)\\ &=\frac{1}{r!}|\det(C_1)|^2\sum_{\sigma,\tau\in S_r}\mathrm{sgn}(\sigma)\mathrm{sgn}(\tau)D(H(C_2^*E_{\sigma(1)\o{\tau(1)}}C_2),\cdots, H(C_2^*E_{\sigma(r)\o{\tau(r)}}C_2))\\ &=\frac{1}{(r!)^2}|\det(C_1)|^2\sum_{\sigma,\tau,\gamma,\pi\in S_r}\mathrm{sgn}(\sigma)\mathrm{sgn}(\tau)\mathrm{sgn}(\gamma)\mathrm{sgn}(\pi)\sum_{k_1,\cdots,k_r,\atop l_1,\cdots,l_r=1}^r\prod_{i=1}^r(C_2^*)_{k_i\o{\sigma(i)}}(C_2)_{\tau(i)\o{l_i}}\theta_{k_i\o{l_i},\gamma(i)\o{\pi(i)}}\\ &=\frac{1}{r!}|\det(C_1)|^2\sum_{\gamma,\pi\in S_r}\mathrm{sgn}(\gamma)\mathrm{sgn}(\pi)D(C_2A_{\gamma(1)\o{\pi(1)}}C_2^*,\cdots, C_2A_{\gamma(r)\o{\pi(r)}}C_2^*)\\ 
&=|\det(C_1)|^2|\det (C_2)|^2\Phi, 
\end{split} \end{align*}
In particular, if both \(C_1\) and \(C_2\) are inverse matrices, then the scaling \eqref{scaling} does not change the sign of \(\Phi\). Moreover, \(S_{C_1,C_2}(H)\) remains a positive map.

This suggests working with a suitable scaling of \(H\). If \(H:\End(V)\to \End(W)\) is a positive linear map, then \(H(X)\) is positive definite (and hence \(\mathrm{rank}(H(X))=r\)) for every nonzero positive semidefinite matrix \(X\in \End(V)\) with \(1\le \mathrm{rank}(X)<r\). Consequently, \(H\) is indecomposable in the sense of \cite[Definition~2.3]{MR2087945}.
 By \cite[Theorem~4.7]{MR2087945} (see also \cite[Theorem 1.2]{Idel2016MatrixScalingReview}), there exist inverse matrices \(C_1\) and \(C_2\) such that \(S_{C_1,C_2}(H)\) is \emph{doubly stochastic}, i.e.
\[
S_{C_1,C_2}(H)(I)=I,
\qquad
S_{C_1,C_2}(H)^*(I)=I
\]
(see \cite[Definition~2.3]{MR2087945}), where \(I\) denotes the identity matrix. Since this scaling does not change the sign of \(\Phi\), we may assume without loss of generality that \(\frac{1}{r}H\) is doubly stochastic. Equivalently,
\[
H(I)=rI,
\qquad
H^*(I)=rI.
\]
The condition \(H(I)=rI\) is the same as
\(
\sum_{i=1}^r B_{i\bar i}=rI,
\)
and \(H^*(I)=rI\) is equivalent to \(\mathrm{tr}(H(X))=r\mathrm{tr}(X)\), hence to
\(
\mathrm{tr}(B_{i\bar j})=r\,\delta_{ij}.
\)

Therefore, Finski's open problem, Problem~\ref{problem1}, reduces to the following matrix formulation.

\begin{pro}\label{problem2}
Let \(B_{i\bar j}\in M_r(\mathbb C)\) (\(1\le i,j\le r\)) satisfy:
\begin{itemize}
  \item[(i)] \(\sum_{i,j=1}^r B_{i\bar j}\xi^i\overline{\xi^j}\succ 0\) for every \(0\ne \xi=(\xi^1,\dots,\xi^r)\in\mathbb C^r\);
    \item[(ii)] \(\sum_{i=1}^r B_{i\bar i}=rI\);
  \item[(iii)] \(\mathrm{tr}(B_{i\bar j})=r\,\delta_{ij}\).
\end{itemize}
Is the quantity
\[
\Phi=\sum_{\sigma\in S_r}\sgn(\sigma)\,
D\bigl(B_{1\overline{\sigma(1)}},\dots,B_{r\overline{\sigma(r)}}\bigr)
\]
positive?
\end{pro}

In the next subsections, we study Problem~\ref{problem2} for \(r=2,3\).

\subsection{The case of rank two}

For \(r=2\), the weak positivity of Schur forms was proved by Griffiths \cite{Griffiths}. Here we present an alternative proof. When \(r=2\), we have
\[
\Phi=D(B_{1\bar1},B_{2\bar2})-D(B_{1\bar2},B_{2\bar1}).
\]
By the definition of the mixed discriminant,
\[
2D(X,Y)=\tr(X)\tr(Y)-\tr(XY).
\]
Therefore, using the normalization \(\tr(B_{1\bar2})=0\), we obtain
\[
2D(B_{1\bar2},B_{2\bar1})
=\tr(B_{1\bar2})\tr(B_{2\bar1})-\tr(B_{1\bar2}B_{2\bar1})
=-\tr(B_{1\bar2}B_{2\bar1})
=-\|B_{1\bar2}\|^2,
\]
where \(\|B_{1\bar2}\|^2:=\tr(B_{1\bar2}B_{1\bar2}^*)=\tr(B_{1\bar2}B_{2\bar1})\). Hence
\[
\Phi=D(B_{1\bar1},B_{2\bar2})+\tfrac{1}{2}\|B_{1\bar2}\|^2\ge D(B_{1\bar1},B_{2\bar2})\ge 1,
\]
where the last inequality follows from \cite{MR2200852} on a proof of Bapat's conjecture for the complex case.

\medskip

We now give a second proof of \(\Phi\ge 1\) based on an integral representation, which does not rely on Gurvits' inequality. We will use the following lemma (see \cite[Lemma~7.24]{WatrousTQI_Draft2018}).

\begin{lemma}\label{lemma1}
Let \(V=\mathbb C^r\) be a complex Euclidean space of dimension \(r\), let \(n\) be a positive integer, and let \(\mu\) be the normalized rotation-invariant measure on the unit sphere \(S^{2r-1}\). Then
\[
\Pi_{V^{\odot n}}
=\dim(V^{\odot n})\int_{S^{2r-1}}(\xi\xi^*)^{\otimes n}\,d\mu(\xi),
\]
where \(V^{\odot n}\) denotes the \(n\)-th symmetric tensor power of \(V\), and \(\Pi_{V^{\odot n}}\) is the orthogonal projector onto \(V^{\odot n}\subset V^{\otimes n}\).
\end{lemma}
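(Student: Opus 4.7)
The plan is to prove the identity by showing that the integral operator
$$M := \int_{S^{2r-1}}(\xi\xi^*)^{\otimes n}\,d\mu(\xi) \in \End(V^{\otimes n})$$
is a scalar multiple of $\Pi_{V^{\odot n}}$, and then to fix the scalar by comparing traces. The argument splits into three standard steps.

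First, I would observe that for each $\xi\in S^{2r-1}$ the operator $(\xi\xi^*)^{\otimes n}$ equals $\xi^{\otimes n}(\xi^{\otimes n})^*$, and since $\xi^{\otimes n}$ lies in $V^{\odot n}$, the image and support of $(\xi\xi^*)^{\otimes n}$ are contained in $V^{\odot n}$. Integrating, $M$ is supported on $V^{\odot n}$; equivalently, $\Pi_{V^{\odot n}}\,M\,\Pi_{V^{\odot n}}=M$. So it suffices to analyze $M$ as an operator on the symmetric subspace.

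Second, I would invoke unitary invariance of $\mu$. For any $U\in U(r)$, the change of variables $\xi\mapsto U\xi$ preserves $\mu$ and gives
$$U^{\otimes n}\,M\,(U^{\otimes n})^* = \int_{S^{2r-1}}(U\xi\xi^*U^*)^{\otimes n}\,d\mu(\xi) = M.$$
Thus $M|_{V^{\odot n}}$ commutes with the representation of $U(r)$ on $V^{\odot n}$. Since this representation is irreducible (a standard fact from Schur--Weyl duality or from the theory of highest-weight representations of $GL(r)$), Schur's lemma forces $M|_{V^{\odot n}} = c\,\Pi_{V^{\odot n}}$ for some scalar $c\ge 0$.

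Finally, I would determine $c$ by taking traces: on one hand $\tr(\Pi_{V^{\odot n}}) = \dim V^{\odot n}$, and on the other
$$\tr(M) = \int_{S^{2r-1}}\tr\bigl((\xi\xi^*)^{\otimes n}\bigr)\,d\mu(\xi) = \int_{S^{2r-1}}(\tr(\xi\xi^*))^n\,d\mu(\xi) = \int_{S^{2r-1}}1\,d\mu(\xi) = 1,$$
so $c = 1/\dim V^{\odot n}$, which rearranges to the claimed formula. The only nontrivial input is irreducibility of the $U(r)$-action on $V^{\odot n}$, and this is the one place I would either cite a standard reference or, if a self-contained argument is preferred, observe instead that the span of $\{\xi^{\otimes n}:\xi\in V\}$ is all of $V^{\odot n}$ and use polarization together with unitary invariance to reach the same conclusion. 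No serious obstacle is expected; the result is a well-known identity from the representation theory underlying quantum information.
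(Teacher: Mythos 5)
Your proof is correct and complete. The paper does not actually give its own proof of this lemma; it simply cites Lemma~7.24 of Watrous's textbook \cite{WatrousTQI_Draft2018} and moves on, so there is no in-paper argument to compare against. Your three-step route---observing that $(\xi\xi^*)^{\otimes n}=\xi^{\otimes n}(\xi^{\otimes n})^*$ is supported on the symmetric subspace, using $U(r)$-invariance of $\mu$ together with irreducibility of the $U(r)$-action on $V^{\odot n}$ to invoke Schur's lemma, and fixing the scalar by a trace computation using $\|\xi\|=1$---is the standard proof of this identity and is exactly what a self-contained replacement for the citation would look like. The trace step is airtight because $\tr\bigl((\xi\xi^*)^{\otimes n}\bigr)=(\xi^*\xi)^n=1$ on the unit sphere, and the irreducibility of $\mathrm{Sym}^n(\mathbb{C}^r)$ as a $U(r)$-module is classical (highest weight $(n,0,\dots,0)$). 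Your proposed alternative to citing irreducibility---noting that $\{\xi^{\otimes n}\}$ spans $V^{\odot n}$ by polarization and that $M$ is positive semidefinite with the same support---also works, though one still needs an argument (e.g.\ commutation with all $U^{\otimes n}$ plus the fact that any proper invariant subspace would contradict spanning) to conclude proportionality to the projector. Either way the proof stands.
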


As a consequence, we obtain the following trace-moment identity.

\begin{lemma}\label{lemma:trace-moment}
For any matrices \(U_1,\dots,U_n\in M_r(\mathbb C)\), one has
\[
\int_{S^{2r-1}}\prod_{i=1}^n(\xi^*U_i\xi)\,d\mu(\xi)
=\frac{1}{(r)_n}\sum_{\pi\in S_n}\tr_\pi(U_1,\dots,U_n),
\]
where \((r)_n=r(r+1)\cdots(r+n-1)\), and
\[
\tr_\pi(U_1,\dots,U_n)=\prod_{\ell=1}^s \tr\!\left(\prod_{i\in c_\ell} U_i\right)
\]
if \(\pi=(c_1)\cdots(c_s)\) is the cycle decomposition of \(\pi\in S_n\).
\end{lemma}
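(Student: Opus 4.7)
The plan is to reduce the trace moment to a tensor-product computation and then invoke Lemma~\ref{lemma1}. First, I would use the elementary identity $\xi^* U_i\xi=\tr(U_i\,\xi\xi^*)$. Taking the product over $i$ and interpreting it as a trace on $V^{\otimes n}$ gives
\[
\prod_{i=1}^{n}(\xi^*U_i\xi)=\tr\!\bigl((U_1\otimes\cdots\otimes U_n)\,(\xi\xi^*)^{\otimes n}\bigr).
\]
Integrating over $S^{2r-1}$ and applying Lemma~\ref{lemma1} (together with linearity of the trace) yields
\[
\int_{S^{2r-1}}\prod_{i=1}^{n}(\xi^*U_i\xi)\,d\mu(\xi)
=\frac{1}{\dim(V^{\odot n})}\,\tr\!\bigl((U_1\otimes\cdots\otimes U_n)\,\Pi_{V^{\odot n}}\bigr),
\]
so the problem is reduced to evaluating this trace.

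Next, I would invoke the classical description of the symmetrizer,
\[
\Pi_{V^{\odot n}}=\frac{1}{n!}\sum_{\pi\in S_n}P_\pi,
\]
where $P_\pi$ is the permutation operator $P_\pi(v_1\otimes\cdots\otimes v_n)=v_{\pi(1)}\otimes\cdots\otimes v_{\pi(n)}$ on $V^{\otimes n}$. A direct index computation shows
\[
\tr\!\bigl((U_1\otimes\cdots\otimes U_n)\,P_\pi\bigr)
=\sum_{i_1,\dots,i_n=1}^{r}\prod_{k=1}^{n}(U_k)_{i_k,\,i_{\pi(k)}}.
\]
If $\pi=(c_1)\cdots(c_s)$ is the cycle decomposition, then summing the indices around each cycle produces exactly $\prod_{\ell=1}^{s}\tr\!\bigl(\prod_{i\in c_\ell}U_i\bigr)=\tr_\pi(U_1,\dots,U_n)$, which is precisely the definition of $\tr_\pi$ in the statement.

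Finally, combining these two ingredients and using $\dim(V^{\odot n})=\binom{r+n-1}{n}=(r)_n/n!$, I obtain
\[
\int_{S^{2r-1}}\prod_{i=1}^{n}(\xi^*U_i\xi)\,d\mu(\xi)
=\frac{n!}{(r)_n}\cdot\frac{1}{n!}\sum_{\pi\in S_n}\tr_\pi(U_1,\dots,U_n)
=\frac{1}{(r)_n}\sum_{\pi\in S_n}\tr_\pi(U_1,\dots,U_n),
\]
which is the claimed identity. The only real subtlety is the cycle-decomposition computation in the second paragraph, but the convention $P_\pi(v_1\otimes\cdots\otimes v_n)=v_{\pi(1)}\otimes\cdots\otimes v_{\pi(n)}$ (matching the definition of $\tr_\pi$) makes it routine; no step beyond Lemma~\ref{lemma1} and standard symmetric-group representation theory is required.
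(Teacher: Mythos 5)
Your proposal is correct and follows essentially the same route as the paper: apply Lemma~\ref{lemma1} to reduce the integral to $\frac{1}{\dim(V^{\odot n})}\tr\bigl(\Pi_{V^{\odot n}}(U_1\otimes\cdots\otimes U_n)\bigr)$, expand the symmetrizer as $\frac{1}{n!}\sum_{\pi\in S_n}P_\pi$, and use $\dim(V^{\odot n})=\binom{r+n-1}{n}=(r)_n/n!$. The only difference is cosmetic: you spell out the index computation showing $\tr\bigl((U_1\otimes\cdots\otimes U_n)P_\pi\bigr)=\tr_\pi(U_1,\dots,U_n)$, which the paper leaves implicit.
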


\begin{proof}
By Lemma~\ref{lemma1},
\begin{align*}
\int_{S^{2r-1}}\prod_{i=1}^n(\xi^*U_i\xi)\,d\mu(\xi)
&=\frac{1}{\dim(V^{\odot n})}\,
\tr\!\left[\int_{S^{2r-1}}(\xi\xi^*)^{\otimes n}\,d\mu(\xi)\cdot (U_1\otimes\cdots\otimes U_n)\right]\\
&=\frac{1}{\dim(V^{\odot n})}\,\tr\!\bigl(\Pi_{V^{\odot n}}(U_1\otimes\cdots\otimes U_n)\bigr).
\end{align*}
Let $W_\sigma$ be the permutation operator induced by $\sigma\in S_n$. Since \(\Pi_{V^{\odot n}}=\frac{1}{n!}\sum_{\sigma\in S_n}W_\sigma\), and \(\dim(V^{\odot n})=\binom{r+n-1}{n}\), the claimed formula follows, with \((r)_n=n!\binom{r+n-1}{n}\).
\end{proof}

For \(r=2\), Lemma~\ref{lemma:trace-moment} gives
\[
\int_{S^{3}}(\xi^*U\xi)(\xi^*V\xi)\,d\mu(\xi)
=\frac{1}{6}\bigl(\tr(U)\tr(V)+\tr(UV)\bigr).
\]
Consequently,
\begin{align*}
2D(B_{1\bar1},B_{2\bar2})
&=\tr(B_{1\bar1})\tr(B_{2\bar2})-\tr(B_{1\bar1}B_{2\bar2})\\
&=2\tr(B_{1\bar1})\tr(B_{2\bar2})
-6\int_{S^3}(\xi^*B_{1\bar1}\xi)(\xi^*B_{2\bar2}\xi)\,d\mu(\xi),
\end{align*}
and similarly
\[
2D(B_{1\bar2},B_{2\bar1})
=-6\int_{S^3}(\xi^*B_{1\bar2}\xi)(\xi^*B_{2\bar1}\xi)\,d\mu(\xi),
\]
because \(\tr(B_{1\bar2})=\tr(B_{2\bar1})=0\).
Using \(\tr(B_{1\bar1})=\tr(B_{2\bar2})=2\), we obtain
\[
\Phi
= D(B_{1\bar1},B_{2\bar2})-D(B_{1\bar2},B_{2\bar1})
=\int_{S^3}\bigl(4-3\det(C(\xi))\bigr)\,d\mu(\xi),
\]
where
\[
C(\xi)=
\begin{pmatrix}
\xi^*B_{1\bar1}\xi & \xi^*B_{1\bar2}\xi\\
\xi^*B_{2\bar1}\xi & \xi^*B_{2\bar2}\xi
\end{pmatrix}.
\]
For each \(\xi\in S^3\), the matrix \(C(\xi)\) is Hermitian positive definite, and
\[
\tr(C(\xi))=\xi^*(B_{1\bar1}+B_{2\bar2})\xi=2,
\]
since \(B_{1\bar1}+B_{2\bar2}=2I\). Hence
\[
\det(C(\xi))\le \left(\frac{\tr(C(\xi))}{2}\right)^2=1,
\]
and therefore
\[
\Phi=\int_{S^3}\bigl(4-3\det(C(\xi))\bigr)\,d\mu(\xi)\ge 1.
\]

\subsection{The case of rank three}

We now consider the case \(r=3\). Expanding the definition of \(\Phi\), we obtain
\begin{equation}\label{eq:Phi-6terms}
\begin{aligned}
\Phi
&=
D(B_{1\bar1},B_{2\bar2},B_{3\bar3})
+D(B_{1\bar2},B_{2\bar3},B_{3\bar1})
+D(B_{1\bar3},B_{2\bar1},B_{3\bar2})\\
&\quad
-D(B_{1\bar2},B_{2\bar1},B_{3\bar3})
-D(B_{1\bar3},B_{3\bar1},B_{2\bar2})
-D(B_{2\bar3},B_{3\bar2},B_{1\bar1}).
\end{aligned}
\end{equation}

By Lemma~\ref{lemma:trace-moment} (with \(r=3\)), we have the following spherical moment identities on \(S^{5}=\{\xi\in\mathbb C^{3}:\|\xi\|=1\}\).

\begin{lemma}\label{lem:moments}
For any \(3\times 3\) matrices \(U,V,W\), one has
\[
12\int_{S^5}(\xi^{*}U\xi)(\xi^{*}V\xi)\,d\mu(\xi)
=
\tr(U)\tr(V)+\tr(UV),
\]
and
\begin{align*}
60\int_{S^5}(\xi^{*}U\xi)(\xi^{*}V\xi)(\xi^{*}W\xi)\,d\mu(\xi)
&=
\tr(U)\tr(V)\tr(W)
+\tr(UV)\tr(W)+\tr(UW)\tr(V)\\
&\quad +\tr(VW)\tr(U)
+\tr(UVW)+\tr(UWV).
\end{align*}
\end{lemma}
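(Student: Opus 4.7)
The plan is to derive both identities as direct corollaries of Lemma~\ref{lemma:trace-moment} by specializing to $r=3$ and enumerating the symmetric group $S_n$ for $n=2$ and $n=3$. No new analytic input is needed beyond the trace-moment identity already proved.

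For the first identity, I would take $r=3$, $n=2$ in Lemma~\ref{lemma:trace-moment}. The prefactor is $(3)_2 = 3\cdot 4 = 12$. The group $S_2$ has two elements: the identity, whose cycle decomposition is $(1)(2)$, contributing $\tr_{e}(U,V) = \tr(U)\tr(V)$; and the transposition $(12)$, contributing $\tr_{(12)}(U,V) = \tr(UV)$. Summing yields
\[
12\int_{S^5}(\xi^{*}U\xi)(\xi^{*}V\xi)\,d\mu(\xi) = \tr(U)\tr(V) + \tr(UV),
\]
as claimed.

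For the second identity, I would take $r=3$, $n=3$. The prefactor is $(3)_3 = 3\cdot 4\cdot 5 = 60$. I then enumerate the six elements of $S_3$ and record the cycle structure and corresponding $\tr_\pi$: the identity $e$ with cycles $(1)(2)(3)$ gives $\tr(U)\tr(V)\tr(W)$; the three transpositions $(12),(13),(23)$ give respectively $\tr(UV)\tr(W)$, $\tr(UW)\tr(V)$, and $\tr(VW)\tr(U)$; and the two $3$-cycles $(123)$ and $(132)$ give $\tr(UVW)$ and $\tr(UWV)$, respectively. Summing the six terms and dividing by $60$ yields exactly the stated formula.

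There is no real obstacle: the only bookkeeping is to confirm the Pochhammer symbol values $(3)_2=12$ and $(3)_3=60$ and to correctly translate each cycle into the corresponding product of traces. Both identities could equivalently be derived from Wick-type moment formulas on the complex sphere, but invoking Lemma~\ref{lemma:trace-moment} directly is the shortest route and keeps the argument self-contained within the paper.
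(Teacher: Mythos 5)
Your proposal is correct and is exactly the route the paper intends: the paper states Lemma~\ref{lem:moments} as an immediate consequence of Lemma~\ref{lemma:trace-moment} with $r=3$, and your enumeration of $S_2$ and $S_3$ with the Pochhammer values $(3)_2=12$ and $(3)_3=60$ supplies the straightforward bookkeeping that the paper leaves implicit.
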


We also recall the standard trace expansion (valid for all \(3\times 3\) matrices \(U,V,W\)):
\begin{equation}\label{eq:trace-expansion}
\begin{aligned}
6D(U,V,W)
&=\tr(U)\tr(V)\tr(W)
-\tr(U)\tr(VW)-\tr(V)\tr(UW)-\tr(W)\tr(UV)\\
&\quad+\tr(UVW)+\tr(UWV).
\end{aligned}
\end{equation}

Combining Lemma~\ref{lem:moments} with \eqref{eq:trace-expansion} yields the following averaging formula for the mixed discriminant.

\begin{lemma}\label{lem:D-average}
For any \(3\times 3\) matrices \(U,V,W\),
\begin{equation}\label{eq:D-average}
\begin{aligned}
6D(U,V,W)
&=
60\int_{S^5}(\xi^{*}U\xi)(\xi^{*}V\xi)(\xi^{*}W\xi)\,d\mu(\xi)\\
&\quad
-2\Big(\tr(UV)\tr(W)+\tr(UW)\tr(V)+\tr(VW)\tr(U)\Big).
\end{aligned}
\end{equation}
\end{lemma}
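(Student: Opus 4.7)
The plan is to derive Lemma~\ref{lem:D-average} by directly subtracting the two identities already at hand, namely the standard trace expansion \eqref{eq:trace-expansion} of the mixed discriminant and the cubic spherical moment formula in Lemma~\ref{lem:moments}. Both expressions are sums of the six symmetric trace monomials built from $U,V,W$, and they differ only in the signs attached to the ``mixed--product'' terms $\tr(UV)\tr(W)$, $\tr(UW)\tr(V)$, $\tr(VW)\tr(U)$; the triple--product terms $\tr(U)\tr(V)\tr(W)$, $\tr(UVW)$, $\tr(UWV)$ appear with the same coefficient $+1$ in both. So the plan is to compare them term by term and solve for $6D(U,V,W)$.

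Concretely, first I would write the right-hand side of the second identity in Lemma~\ref{lem:moments} as
\begin{equation*}
60\int_{S^5}(\xi^{*}U\xi)(\xi^{*}V\xi)(\xi^{*}W\xi)\,d\mu(\xi)
=T_{+}+M,
\end{equation*}
where
\begin{equation*}
T_{+}:=\tr(U)\tr(V)\tr(W)+\tr(UVW)+\tr(UWV),\qquad
M:=\tr(UV)\tr(W)+\tr(UW)\tr(V)+\tr(VW)\tr(U).
\end{equation*}
Next, I would rewrite \eqref{eq:trace-expansion} as $6D(U,V,W)=T_{+}-M$. Subtracting these two equalities gives
\begin{equation*}
60\int_{S^5}(\xi^{*}U\xi)(\xi^{*}V\xi)(\xi^{*}W\xi)\,d\mu(\xi)-6D(U,V,W)=2M,
\end{equation*}
and rearranging yields exactly \eqref{eq:D-average}.

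There is essentially no obstacle: once Lemma~\ref{lem:moments} and identity \eqref{eq:trace-expansion} are on the table, the lemma is a one-line algebraic rearrangement that exploits the fact that the polarization of $\det$ and the cubic spherical moment produce the same six monomials with opposite signs on the length-two/length-one splittings. The only bookkeeping to be careful about is the coefficient $2$ in front of $M$, which comes from the $+M$ in the moment identity and the $-M$ in the mixed-discriminant expansion combining additively after moving $6D(U,V,W)$ to the other side.
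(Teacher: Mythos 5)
Your proof is correct and is exactly the paper's own derivation: the paper states that Lemma~\ref{lem:D-average} follows by ``combining Lemma~\ref{lem:moments} with \eqref{eq:trace-expansion},'' which is precisely the subtraction $60\int - 6D = (T_{+}+M)-(T_{+}-M)=2M$ you carried out. Nothing is missing.
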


For \(\xi\in S^{5}\), set
\[
a=\xi^{*}B_{1\bar1}\xi,\quad
b=\xi^{*}B_{2\bar2}\xi,\quad
c=\xi^{*}B_{3\bar3}\xi,\qquad
x=\xi^{*}B_{1\bar2}\xi,\quad
y=\xi^{*}B_{2\bar3}\xi,\quad
z=\xi^{*}B_{1\bar3}\xi.
\]
Define
\[
C(\xi):=(\xi^*B_{i\bar j}\xi)_{1\le i,j\le 3}
=
\begin{pmatrix}
a & x & z\\
\overline{x} & b & y\\
\overline{z} & \overline{y} & c
\end{pmatrix}.
\]
Under assumption (i) in Problem~\ref{problem2}, the matrix \(C(\xi)\) is positive definite for every \(\xi\in S^5\).
Moreover, by (ii) we have
\[
\tr C(\xi)=a+b+c=\xi^*\Bigl(\sum_{i=1}^3 B_{i\bar i}\Bigr)\xi=\xi^*(3I)\xi=3.
\]
A direct computation gives
\begin{equation}\label{eq:detC}
\det C(\xi)=abc+2\Re(xy\overline{z})-c|x|^{2}-b|z|^{2}-a|y|^{2}.
\end{equation}
We also introduce
\begin{equation}\label{eq:sigma2}
\sigma_{2}(C(\xi))
:=
(ab-|x|^{2})+(ac-|z|^{2})+(bc-|y|^{2})
=
ab+ac+bc-(|x|^{2}+|y|^{2}+|z|^{2}).
\end{equation}

\begin{lemma}\label{lem:Phi-integral}
Under assumptions \emph{(i)}--\emph{(iii)} in Problem~\ref{problem2} with \(r=3\), one has
\begin{equation}\label{eq:Phi-integral}
\Phi
=
\int_{S^{5}}
\Bigl(
10\,\det C(\xi)+27-12\,\sigma_{2}(C(\xi))
\Bigr)\,d\mu(\xi).
\end{equation}
\end{lemma}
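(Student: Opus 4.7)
The strategy is to apply Lemma~\ref{lem:D-average} to each of the six mixed discriminants appearing in the expansion \eqref{eq:Phi-6terms} of $\Phi$, and then separately collect the resulting cubic integrands and the resulting trace terms.

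\textbf{Step 1 (Cubic terms recombine into $\det C(\xi)$).} For each triple $D(U,V,W)$ occurring in $\Phi$, Lemma~\ref{lem:D-average} contributes a cubic integrand $\tfrac{60}{6}(\xi^{*}U\xi)(\xi^{*}V\xi)(\xi^{*}W\xi)=10(\xi^{*}U\xi)(\xi^{*}V\xi)(\xi^{*}W\xi)$. Since $C(\xi)_{i\bar j}=\xi^{*}B_{i\bar j}\xi$ and $B_{j\bar i}=B_{i\bar j}^{*}$ forces $\xi^{*}B_{j\bar i}\xi=\overline{\xi^{*}B_{i\bar j}\xi}$, the signed sum over the six triples reproduces, term by term, the Leibniz expansion of $\det C(\xi)$ as given in \eqref{eq:detC}: the three ``diagonal'' triples give $abc\pm xy\overline{z}\pm\overline{x}\overline{y}z$, and the three ``off-diagonal'' triples with sign $-$ give $-c|x|^{2}-b|z|^{2}-a|y|^{2}$. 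Hence the cubic part contributes exactly $10\int_{S^{5}}\det C(\xi)\,d\mu(\xi)$.

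\textbf{Step 2 (Trace terms simplify via the normalization).} The trace part produced by Lemma~\ref{lem:D-average} is $-\tfrac{1}{3}$ times the signed sum over the six triples of $\tr(UV)\tr(W)+\tr(UW)\tr(V)+\tr(VW)\tr(U)$. Normalization~(iii), i.e.\ $\tr(B_{i\bar j})=3\delta_{ij}$, kills every summand that contains a trace $\tr(B_{i\bar j})$ with $i\neq j$. What survives is
\[
-\bigl(\tr(B_{1\bar1}B_{2\bar2})+\tr(B_{1\bar1}B_{3\bar3})+\tr(B_{2\bar2}B_{3\bar3})\bigr)+\|B_{1\bar2}\|^{2}+\|B_{1\bar3}\|^{2}+\|B_{2\bar3}\|^{2},
\]
where I used $\tr(B_{i\bar j}B_{j\bar i})=\|B_{i\bar j}\|^{2}$; in particular the two ``cyclic'' triples $(1\bar2,2\bar3,3\bar1)$ and $(1\bar3,2\bar1,3\bar2)$ contribute nothing because every individual trace vanishes.

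\textbf{Step 3 (Rewrite the trace part as a spherical moment).} Using the $n=2$ case of Lemma~\ref{lem:moments}, $12\int(\xi^{*}U\xi)(\xi^{*}V\xi)\,d\mu=\tr(U)\tr(V)+\tr(UV)$, the normalization conditions (ii)--(iii) yield
\[
12\int_{S^{5}}(ab+ac+bc)\,d\mu=27+\tr(B_{1\bar1}B_{2\bar2})+\tr(B_{1\bar1}B_{3\bar3})+\tr(B_{2\bar2}B_{3\bar3}),
\]
and $12\int_{S^{5}}(|x|^{2}+|y|^{2}+|z|^{2})\,d\mu=\|B_{1\bar2}\|^{2}+\|B_{1\bar3}\|^{2}+\|B_{2\bar3}\|^{2}$. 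Subtracting and using the definition \eqref{eq:sigma2} of $\sigma_{2}(C(\xi))$, the trace contribution from Step~2 is exactly $27-12\int_{S^{5}}\sigma_{2}(C(\xi))\,d\mu$. Adding the cubic contribution from Step~1 gives \eqref{eq:Phi-integral}.

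\textbf{Expected obstacle.} This is essentially a bookkeeping computation; there is no serious analytic difficulty. The one point that requires care is the sign/cancellation check in Step~1, where one must verify that the signed sum over the six triples in \eqref{eq:Phi-6terms} assembles precisely into the Leibniz expansion of $\det C(\xi)$ (and not, say, into the permanent or a different combination). This is guaranteed by the structure of $\Phi=\sum_{\sigma\in S_{3}}\sgn(\sigma)D(B_{1\overline{\sigma(1)}},B_{2\overline{\sigma(2)}},B_{3\overline{\sigma(3)}})$, whose symbolic expansion is identical in shape to $\det$ applied to the $3\times 3$ matrix with entries $\xi^{*}B_{i\bar j}\xi$.
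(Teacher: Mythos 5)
Your proposal is correct and is essentially the same computation as the paper's proof, merely organized in the reverse direction: you apply Lemma~\ref{lem:D-average} term by term to \eqref{eq:Phi-6terms}, while the paper instead integrates the Leibniz expansion of $\det C(\xi)$ and the expression for $\sigma_2(C(\xi))$ against $d\mu$ using Lemma~\ref{lem:moments} and then subtracts; both routes reduce to the same spherical-moment identities and the same use of the normalizations (ii)--(iii). Your Step 2 sign and vanishing bookkeeping (in particular, that the two cyclic triples contribute nothing and the three transposition triples each leave only the $\|B_{i\bar j}\|^2$ term) is accurate, and the reassembly into $27-12\int\sigma_2$ in Step 3 matches the paper exactly.
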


\begin{proof}
Using \eqref{eq:detC} and Lemma~\ref{lem:moments}, we compute
\begin{align*}
&\quad 60\int_{S^5}\det C(\xi)\,d\mu(\xi)
=
60\int_{S^5}\bigl(abc+2\Re(xy\overline{z})-c|x|^{2}-b|z|^{2}-a|y|^{2}\bigr)\,d\mu(\xi)\\
&=6\Phi
+6\Bigl[
\tr(B_{1\bar1}B_{2\bar2}-B_{1\bar2}B_{2\bar1})
+\tr(B_{2\bar2}B_{3\bar3}-B_{2\bar3}B_{3\bar2})
+\tr(B_{1\bar1}B_{3\bar3}-B_{1\bar3}B_{3\bar1})
\Bigr].
\end{align*}
On the other hand, by \eqref{eq:sigma2} and Lemma~\ref{lem:moments},
\begin{align*}
&\quad 12\int_{S^5}\sigma_2(C(\xi))\,d\mu(\xi)
=
\bigl(\tr(B_{1\bar1})\tr(B_{2\bar2})+\tr(B_{1\bar1})\tr(B_{3\bar3})+\tr(B_{2\bar2})\tr(B_{3\bar3})\bigr)\\
&\quad
+\Bigl[
\tr(B_{1\bar1}B_{2\bar2}-B_{1\bar2}B_{2\bar1})
+\tr(B_{2\bar2}B_{3\bar3}-B_{2\bar3}B_{3\bar2})
+\tr(B_{1\bar1}B_{3\bar3}-B_{1\bar3}B_{3\bar1})
\Bigr].
\end{align*}
By assumption {(iii)}, \(\tr(B_{i\bar i})=3\) for \(i=1,2,3\), hence the first line equals \(3\cdot 3+3\cdot 3+3\cdot 3=27\).
Subtracting the two identities above yields \eqref{eq:Phi-integral}.
\end{proof}

We will use the following classical inequality (often attributed to Schur).

\begin{lemma}[Schur inequality]\label{lem:Schur}
If \(\lambda_1,\lambda_2,\lambda_3\ge 0\), then
\[
(\lambda_1+\lambda_2+\lambda_3)^3+9\lambda_1\lambda_2\lambda_3
\;\ge\;
4(\lambda_1+\lambda_2+\lambda_3)(\lambda_1\lambda_2+\lambda_1\lambda_3+\lambda_2\lambda_3),
\]
with equality if and only if \(\lambda_1=\lambda_2=\lambda_3\), or two of \(\lambda_1,\lambda_2,\lambda_3\) are equal and the third is \(0\).
\end{lemma}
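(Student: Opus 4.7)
The plan is to recognize the stated inequality as the classical Schur inequality of degree three (Schur's inequality with exponent $t=1$), rewritten in terms of the elementary symmetric polynomials $e_1=\lambda_1+\lambda_2+\lambda_3$, $e_2=\lambda_1\lambda_2+\lambda_1\lambda_3+\lambda_2\lambda_3$, $e_3=\lambda_1\lambda_2\lambda_3$. The target inequality reads $e_1^3+9e_3\ge 4e_1 e_2$. First I would prove the auxiliary Schur identity
\[
S(\lambda_1,\lambda_2,\lambda_3):=\sum_{i=1}^{3}\lambda_i(\lambda_i-\lambda_j)(\lambda_i-\lambda_k)\ge 0,
\]
where $(i,j,k)$ runs over cyclic permutations of $(1,2,3)$, under the assumption $\lambda_1,\lambda_2,\lambda_3\ge 0$.

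For the nonnegativity of $S$, I would argue by symmetry: assume without loss of generality $\lambda_1\ge\lambda_2\ge\lambda_3\ge 0$. Grouping the first two terms gives
\[
\lambda_1(\lambda_1-\lambda_2)(\lambda_1-\lambda_3)+\lambda_2(\lambda_2-\lambda_1)(\lambda_2-\lambda_3)
=(\lambda_1-\lambda_2)\bigl[\lambda_1(\lambda_1-\lambda_3)-\lambda_2(\lambda_2-\lambda_3)\bigr],
\]
and the bracket is nonnegative because the function $t\mapsto t(t-\lambda_3)$ is increasing for $t\ge\lambda_3/2$, which holds at $t=\lambda_1,\lambda_2$ since $\lambda_1\ge\lambda_2\ge\lambda_3$. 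The third summand $\lambda_3(\lambda_3-\lambda_1)(\lambda_3-\lambda_2)$ is a product of one nonnegative and two nonpositive factors, hence nonnegative. Thus $S\ge 0$.

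Next I would expand $S$ using Newton's identities. A direct calculation gives $\sum_i\lambda_i^3=e_1^3-3e_1e_2+3e_3$ and $\sum_{i\ne j}\lambda_i^2\lambda_j=e_1 e_2-3e_3$, whence
\[
S=\sum_i\lambda_i^3-\sum_{i\ne j}\lambda_i^2\lambda_j+3\lambda_1\lambda_2\lambda_3=e_1^3-4e_1 e_2+9e_3,
\]
so $S\ge 0$ is precisely the stated inequality.

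Finally, for the equality analysis, I would trace back the two inequalities used. In the grouping above, equality forces either $\lambda_1=\lambda_2$ or $\lambda_1(\lambda_1-\lambda_3)=\lambda_2(\lambda_2-\lambda_3)$; under $\lambda_1\ge\lambda_2$ both alternatives give $\lambda_1=\lambda_2$. The third term $\lambda_3(\lambda_3-\lambda_1)(\lambda_3-\lambda_2)$ vanishes iff $\lambda_3=0$ or $\lambda_3=\lambda_2$. Combined with $\lambda_1=\lambda_2$, these give the two cases: $\lambda_1=\lambda_2=\lambda_3$, or two of the $\lambda_i$ equal and the third zero. No step here presents any real obstacle; the only point requiring care is verifying the monotonicity of $t\mapsto t(t-\lambda_3)$ on the relevant range and correctly enumerating the equality cases, both of which are elementary.
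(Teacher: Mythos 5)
Your proof is correct and follows essentially the same route as the paper. Both arguments reduce the inequality to Schur's sum $S=\sum_i \lambda_i(\lambda_i-\lambda_j)(\lambda_i-\lambda_k)\ge 0$, order $\lambda_1\ge\lambda_2\ge\lambda_3\ge 0$, and then use the identical grouping of the first two summands against the third; the only cosmetic differences are that you obtain the identity $S=e_1^3-4e_1e_2+9e_3$ via Newton's identities rather than a direct rearrangement, and you justify nonnegativity of the bracket $\lambda_1(\lambda_1-\lambda_3)-\lambda_2(\lambda_2-\lambda_3)$ by a monotonicity argument, whereas the paper simply factors it as $(\lambda_1-\lambda_2)(\lambda_1+\lambda_2-\lambda_3)$ to make the sign manifest.
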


\begin{proof}
Set \(a=\lambda_1\), \(b=\lambda_2\), \(c=\lambda_3\) and consider
\[
\Delta:=(a+b+c)^3+9abc-4(a+b+c)(ab+ac+bc).
\]
A direct rearrangement gives
\[
\Delta=a(a-b)(a-c)+b(b-c)(b-a)+c(c-a)(c-b).
\]
By symmetry we may assume \(a\ge b\ge c\ge 0\). Then
\[
\Delta=(a-b)^2(a+b-c)+c(a-c)(b-c)\ge 0,
\]
which proves the inequality. The equality statement follows from \(\Delta=0\) under the same ordering. The proof is complete.
\end{proof}

\begin{thm}\label{thm:r=3}
Under assumptions \emph{(i)}--\emph{(iii)} in Problem~\ref{problem2} with \(r=3\), one has
\[
\Phi\ge \int_{S^5}\det C(\xi)\,d\mu(\xi)>0.
\]
\end{thm}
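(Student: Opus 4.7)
The plan is to combine the integral representation of $\Phi$ from Lemma~\ref{lem:Phi-integral} with a pointwise estimate for the integrand expressed via the eigenvalues of $C(\xi)$. First I would isolate the desired lower bound by subtracting $\int_{S^5}\det C(\xi)\,d\mu(\xi)$ from both sides of \eqref{eq:Phi-integral}, so that
\[
\Phi-\int_{S^5}\det C(\xi)\,d\mu(\xi)
=\int_{S^5}\bigl(9\det C(\xi)+27-12\,\sigma_2(C(\xi))\bigr)\,d\mu(\xi).
\]
The task then reduces to showing the integrand is nonnegative for every $\xi\in S^5$.

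Next I would diagonalize pointwise: by assumption~(i), for each fixed $\xi\in S^5$ the matrix $C(\xi)$ is Hermitian positive definite, so its eigenvalues $\lambda_1,\lambda_2,\lambda_3\ge 0$ are well-defined; by assumption~(ii) together with $\tr C(\xi)=\xi^{*}(\sum_i B_{i\bar i})\xi$, they satisfy $\lambda_1+\lambda_2+\lambda_3=3$. Since $\det C(\xi)=\lambda_1\lambda_2\lambda_3$ and $\sigma_2(C(\xi))=\lambda_1\lambda_2+\lambda_1\lambda_3+\lambda_2\lambda_3$, the required inequality becomes
\[
9\,\lambda_1\lambda_2\lambda_3+27\;\ge\; 12\bigl(\lambda_1\lambda_2+\lambda_1\lambda_3+\lambda_2\lambda_3\bigr).
\]
Using $(\lambda_1+\lambda_2+\lambda_3)^3=27$ and $\lambda_1+\lambda_2+\lambda_3=3$, this is exactly the Schur inequality of Lemma~\ref{lem:Schur} applied to $(\lambda_1,\lambda_2,\lambda_3)$. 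Thus the integrand is nonnegative on $S^5$, yielding $\Phi\ge \int_{S^5}\det C(\xi)\,d\mu(\xi)$.

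Finally I would verify strict positivity. Since $C(\xi)$ is positive definite for every $\xi\in S^5$, one has $\det C(\xi)>0$ pointwise, and by continuity of $\xi\mapsto \det C(\xi)$ on the compact sphere $S^5$ its spherical average is strictly positive, which completes the proof. The main conceptual step is recognizing that the specific linear combination $9\det+27-12\sigma_2$, produced by the moment identities of Lemma~\ref{lem:moments} and the trace expansion \eqref{eq:trace-expansion}, matches exactly the content of Schur's inequality under the normalization $\tr C(\xi)=3$; once this algebraic match is made, no further estimates are needed and the positivity of $\Phi$ follows from a single pointwise inequality for three nonnegative reals with fixed sum.
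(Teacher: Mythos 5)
Your proof is correct and follows the same approach as the paper: apply Lemma~\ref{lem:Phi-integral}, diagonalize $C(\xi)$ pointwise using the trace normalization $\tr C(\xi)=3$, and invoke the Schur inequality of Lemma~\ref{lem:Schur} to bound the integrand below by $\det C(\xi)$. Your rearrangement (subtracting the determinant integral first) is algebraically identical to the paper's splitting $10\det C(\xi)+27-12\,\sigma_2(C(\xi))=\det C(\xi)+(9\det C(\xi)+27-12\,\sigma_2(C(\xi)))$.
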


\begin{proof}
Fix \(\xi\in S^{5}\), and let \(\lambda_1,\lambda_2,\lambda_3>0\) be the eigenvalues of the positive definite matrix \(C(\xi)\). Then
\[
\lambda_1+\lambda_2+\lambda_3=\tr (C(\xi))=3,\qquad
\det C(\xi)=\lambda_1\lambda_2\lambda_3,\qquad
\sigma_2(C(\xi))=\lambda_1\lambda_2+\lambda_1\lambda_3+\lambda_2\lambda_3.
\]
Applying Lemma~\ref{lem:Schur} with \(\lambda_1+\lambda_2+\lambda_3=3\) yields
\[
27+9\det C(\xi)-12\,\sigma_2(C(\xi))\ge 0.
\]
Therefore,
\[
10\det C(\xi)+27-12\,\sigma_2(C(\xi))
=
\det C(\xi)
+\bigl(27+9\det C(\xi)-12\,\sigma_2(C(\xi))\bigr)
\ge \det C(\xi).
\]
Integrating over \(S^{5}\) and using Lemma~\ref{lem:Phi-integral} gives
\[
\Phi\ge \int_{S^5}\det C(\xi)\,d\mu(\xi)>0,
\]
since \(\det C(\xi)>0\) for all \(\xi\in S^5\).
\end{proof}

\begin{rem}
When \(r=4\), it is natural to try to follow the same strategy as in the lower-rank cases, namely to express \(\Phi\) as an integral over \(S^{7}\) of a symmetric polynomial in the eigenvalues of the Hermitian matrix
\[
C(\xi):=\bigl(\xi^{*}B_{i\bar j}\xi\bigr)_{1\le i,j\le 4}.
\]
Using Lemma~\ref{lemma:trace-moment}, one can indeed show that
\begin{align*}
\Phi
&=\int_{S^7}\Bigl(35\,\sigma_4(C(\xi))+128-\tfrac{80}{3}\,\sigma_2(C(\xi))\Bigr)\,d\mu(\xi)\\
&\quad
-\tfrac{1}{12}\sum_{\sigma\in S_4}\sgn(\sigma)\,
Q\bigl(B_{1\overline{\sigma(1)}},B_{2\overline{\sigma(2)}},B_{3\overline{\sigma(3)}},B_{4\overline{\sigma(4)}}\bigr),
\end{align*}
where \(\sigma_k(C(\xi))=\sum_{i_1<\cdots<i_k}\lambda_{i_1}\cdots\lambda_{i_k}\) denotes the \(k\)-th elementary symmetric polynomial in the eigenvalues \(\lambda_1,\dots,\lambda_4\) of \(C(\xi)\), and
\[
Q(U_1,U_2,U_3,U_4)
:=\sum_{\rho\in\mathcal{C}_4}\tr\bigl(U_{\rho(1)}U_{\rho(2)}U_{\rho(3)}U_{\rho(4)}\bigr),
\]
with \(\mathcal{C}_4\subset S_4\) the set of \(4\)-cycles.

The difficulty is that the remaining term
\[
-\frac{1}{12}\sum_{\sigma\in S_4}\sgn(\sigma)\,
Q\bigl(B_{1\overline{\sigma(1)}},B_{2\overline{\sigma(2)}},B_{3\overline{\sigma(3)}},B_{4\overline{\sigma(4)}}\bigr)
\]
does not seem to admit a representation as an integral over \(S^7\) of a symmetric polynomial in \(C(\xi)\) alone. This obstruction makes the purely integral approach significantly harder to implement in rank \(4\).
\end{rem}

By Theorem~\ref{thm:r=3}, we obtain an affirmative answer to Problem~\ref{problem2} in the case \(r=3\). Consequently, Problem~\ref{problem1} is also settled for \(r=3\), since the two problems are equivalent for each fixed rank \(r\). By \cite[Corollary~3.5]{Fin}, it follows that the top Chern form
\[
P_{(3,0,0)}\bigl(c(E,h^E)\bigr)=c_3(E,h^E)
\]
is weakly positive for every Griffiths positive Hermitian holomorphic vector bundle \((E,h^E)\) of rank three.

On the other hand,  note that Fagioli \cite[Theorem~0.1 and Theorem~2.2]{Fag20} proved that \(P_{(2,1,0)}\bigl(c(E,h^E)\bigr)\) and $c_2(E,h^E)$ is weakly nonnegative for Griffiths semipositive vector bundles of rank three. We now establish weak positivity in the Griffiths positive case.

\begin{prop}\label{prop:P210-positive}
Let \((E,h^E)\to X\) be a Griffiths positive Hermitian holomorphic vector bundle of rank three over a complex manifold \(X\). Then the Schur forms
\[
P_{(2,1,0)}\bigl(c(E,h^E)\bigr)=c_1(E,h^E)\wedge c_2(E,h^E)-c_3(E,h^E)
\]
and 
\begin{equation*}
  P_{(2,0,0)}\bigl(c(E,h^E)\bigr)=c_2(E,h^E)
\end{equation*}
are weakly positive.
\end{prop}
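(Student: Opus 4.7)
The plan is to upgrade Fagioli's weak non-negativity result \cite[Theorem~0.1 and Theorem~2.2]{Fag20} to strict weak positivity by a line-bundle twist argument. Since weak positivity is pointwise, fix $x\in X$; on a neighborhood of $x$ choose a strictly positive $(1,1)$-form $\omega$ and a local Hermitian line bundle $(L,h^L)$ with $c_1(L,h^L)=\omega$. For small $t>0$ we introduce
\[
\widetilde{E}_t := E \otimes L^{-t},
\]
realized concretely by replacing $h^E$ with $h^E\cdot e^{t\phi}$, where $\sqrt{-1}\,\partial\bar\partial\phi=\omega$; its Chern curvature at $x$ is $R^E-tR^L\cdot \mathrm{Id}_E$. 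Griffiths positivity of $(E,h^E)$ at $x$ guarantees that $\widetilde{E}_t$ is Griffiths semipositive at $x$ for all sufficiently small $t>0$.

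Next, applying the standard line-bundle twist identity $c(E)=c(\widetilde{E}_t\otimes L^t)$ and expanding via elementary symmetric polynomials yields
\begin{align*}
c_2(E,h^E) &= c_2(\widetilde{E}_t) + 2t\, c_1(\widetilde{E}_t)\wedge\omega + 3t^2\, \omega^{\wedge 2},\\
P_{(2,1,0)}(c(E,h^E)) &= P_{(2,1,0)}(c(\widetilde{E}_t)) + 2t\,\omega\wedge c_1(\widetilde{E}_t)^{\wedge 2} + 8t^2\, c_1(\widetilde{E}_t)\wedge\omega^{\wedge 2}\\
&\quad + 2t\, c_2(\widetilde{E}_t)\wedge\omega + 8t^3\, \omega^{\wedge 3}.
\end{align*}
Fagioli applied to the Griffiths semipositive bundle $\widetilde{E}_t$ shows that $c_1(\widetilde{E}_t)$, $c_2(\widetilde{E}_t)$, and $P_{(2,1,0)}(c(\widetilde{E}_t))$ are weakly non-negative at $x$. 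Each cross term in the two displays wedges one of these weakly non-negative forms with a strongly positive power of $\omega$, and since $\mathrm{SP}^p\wedge \mathrm{WP}^q\subseteq \mathrm{WP}^{p+q}$, every cross term is weakly non-negative.

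The decisive input is that the pure powers $3t^2\omega^{\wedge 2}$ and $8t^3\omega^{\wedge 3}$ are strongly positive: for any non-zero decomposable $(n-p,0)$-form $\beta$, the volume form $\omega^{\wedge p}\wedge(\sqrt{-1})^{(n-p)^2}\beta\wedge \overline{\beta}$ is strictly positive, since in a basis of $V^*$ extending the factors of $\beta$ it reduces to a positive principal minor of the positive-definite Hermitian matrix of $\omega$. Combining this strict positivity with the weak non-negativity of all remaining contributions shows that $c_2(E,h^E)$ and $P_{(2,1,0)}(c(E,h^E))$ are weakly positive at $x$, and hence throughout $X$. No substantive obstacle is expected; the only points requiring care are the cone wedge rule above and the observation that a local line bundle suffices because weak positivity is a pointwise condition on the curvature.
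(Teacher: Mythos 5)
Your proposal is correct and follows the same line-bundle twist strategy as the paper, with one bookkeeping improvement worth flagging. Both arguments twist \(E\) by a local line bundle with a small \emph{negative} multiple of \(\omega\) as its curvature, keep the twisted bundle Griffiths (semi)positive for small \(t\), apply Fagioli's weak nonnegativity results to the twisted bundle, and then produce a strictly positive correction term. The paper expands \(P_{(2,1,0)}\bigl(c(E,h^E)\bigr)\) in terms of the \emph{original} bundle's Chern forms, writing
\[
P_{(2,1,0)}\bigl(c(E,h^E)\bigr)=P_{(2,1,0)}\bigl(c(E\otimes L)\bigr)+2\varepsilon\,\omega\wedge\bigl(c_2(E,h^E)+c_1(E,h^E)^2\bigr)+O(\varepsilon^2),
\]
and then argues the linear-in-\(\varepsilon\) term dominates the error for \(\varepsilon\) small; this implicitly requires a compactness/uniformity argument over the unit sphere of decomposable test forms. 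You instead expand \emph{exactly} in the Chern forms of the twisted bundle \(\widetilde E_t\), obtaining a finite sum in which every cross term is weakly nonnegative by \(\mathrm{SP}\wedge\mathrm{WP}\subseteq\mathrm{WP}\) (which you correctly invoke, and which holds by the elementary sign computation \((\sqrt{-1})^{p^2+(n-p-q)^2}(-1)^{p(n-p-q)}=(\sqrt{-1})^{(n-q)^2}\)), while the top term \(8t^3\omega^{\wedge 3}\) is strictly weakly positive; there is then no error term to dominate. This is marginally cleaner, and I checked the expansion coefficients; they are correct. Two small points to tidy: the normalization of \(\phi\) is off by a factor of \(2\pi\) relative to the stated \(c_1(L,h^L)=\omega\) (harmless, since only the sign and the \(\omega\)-proportionality matter), and you should note, as the paper does, that Griffiths positivity at \(x\) is an open strict inequality, so the twisted bundle actually remains Griffiths \emph{positive} (hence semipositive, which is what Fagioli's theorem needs) at \(x\) for small \(t\).
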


\begin{proof}
Fix a point \(x\in X\). We prove that \(P_{(2,1,0)}\bigl(c(E,h^E)\bigr)\) is weakly positive at \(x\).
Choose a holomorphic coordinate neighborhood \(U\ni x\) with coordinates \(z=(z^1,\dots,z^n)\) such that \(z(x)=0\), where \(n=\dim X\).
Let \(L\cong \mathbb C\) be the trivial line bundle over \(U\) equipped with the Hermitian metric
\[
h^L_\varepsilon=e^{\varepsilon |z|^2},
\quad
|z|^2=\sum_{p=1}^n |z^p|^2,
\]
for \(\varepsilon>0\) small. Then
\[
c_1(L,h^L_\varepsilon)
=\frac{\sqrt{-1}}{2\pi}\bar\partial\partial\log h^L_\varepsilon
=-\varepsilon\,\omega,
\quad
\omega:=\frac{\sqrt{-1}}{2\pi}\sum_{p=1}^n dz^p\wedge d\bar z^p,
\]
and \(\omega\) is strongly positive.

Let \(R^E\) be the Chern curvature of \((E,h^E)\). The total Chern form of \((E\otimes L,\,h^E\otimes h^L_\varepsilon)\) admits the expansion
\begin{align*}
c(E\otimes L,h^E\otimes h^L_\varepsilon)
&=\det\!\left(I+\Bigl(\tfrac{\sqrt{-1}}{2\pi}R^E-\varepsilon\,\omega\otimes I\Bigr)\right)\\
&=1+\bigl(c_1(E,h^E)-3\varepsilon\,\omega\bigr)\\
&\quad+\bigl(c_2(E,h^E)-2\varepsilon\,\omega\wedge c_1(E,h^E)+3\varepsilon^2\,\omega^2\bigr)\\
&\quad+\bigl(c_3(E,h^E)-\varepsilon\,\omega\wedge c_2(E,h^E)
+\varepsilon^2\,\omega^2\wedge c_1(E,h^E)-\varepsilon^3\,\omega^3\bigr).
\end{align*}
Consequently,
\begin{align*}
&\quad\ c_1(E\otimes L,h^E\otimes h^L_\varepsilon)\wedge c_2(E\otimes L,h^E\otimes h^L_\varepsilon)
-c_3(E\otimes L,h^E\otimes h^L_\varepsilon)\\
&=c_1(E,h^E)\wedge c_2(E,h^E)-c_3(E,h^E)
-2\varepsilon\,\omega\wedge\bigl(c_2(E,h^E)+c_1(E,h^E)^2\bigr)
+O(\varepsilon^2),
\end{align*}
and hence
\begin{equation}\label{eqn5}
P_{(2,1,0)}\bigl(c(E,h^E)\bigr)
=
P_{(2,1,0)}\bigl(c(E\otimes L,h^E\otimes h^L_\varepsilon)\bigr)
+2\varepsilon\,\omega\wedge\bigl(c_2(E,h^E)+c_1(E,h^E)^2\bigr)
+O(\varepsilon^2).
\end{equation}

Since \((E,h^E)\) is Griffiths positive, there exists \(\varepsilon_0>0\) such that \((E\otimes L,h^E\otimes h^L_\varepsilon)\) remains Griffiths positive on a possibly smaller neighborhood of \(x\) for all \(0<\varepsilon<\varepsilon_0\).
By \cite[Theorem~0.1]{Fag20}, the Schur form \(P_{(2,1,0)}\bigl(c(E\otimes L,h^E\otimes h^L_\varepsilon)\bigr)\) is weakly nonnegative.
Moreover, \(\omega\wedge c_2(E,h^E)\) is weakly positive, while \(\omega\wedge c_1(E,h^E)^2\) is strongly positive.
Therefore the term
\[
2\varepsilon\,\omega\wedge\bigl(c_2(E,h^E)+c_1(E,h^E)^2\bigr)
\]
is weakly positive for every \(\varepsilon>0\), and it dominates the error \(O(\varepsilon^2)\) in \eqref{eqn5} when \(\varepsilon\) is sufficiently small.
It follows that \(P_{(2,1,0)}\bigl(c(E,h^E)\bigr)\) is weakly positive at \(x\), and hence on \(X\).

Similarly, note that
\[
c_2(E\otimes L,\,h^E\otimes h^L_{\varepsilon})
=
c_2(E,h^E)-2\varepsilon\,\omega\wedge c_1(E,h^E)+3\varepsilon^2\,\omega^2.
\]
By the same argument, applying \cite[Theorem~2.2]{Fag20} and choosing \(\varepsilon>0\) sufficiently small, we conclude that
\(c_2(E,h^E)\) is weakly positive.
\end{proof}

\begin{proof}[Proof of Theorem~\ref{thm:main}]
The inequality \(\Phi>0\) follows immediately from Theorem~\ref{thm:r=3}. In particular, this settles Open Problem~\ref{problem2} in the case \(r=3\). By the reduction explained in Section~\ref{Sec: Fin-open}, it follows that, for every positive linear map \(H:\End(V)\to \End(W)\) with \(\dim V=\dim W=r=3\), the real number
\[
\mathrm{D}_W\circ H^{\otimes \dim W}\circ \mathrm{D}_V^*(1)
\]
is positive.

Now let \((E,h^E)\) be a Griffiths positive Hermitian holomorphic vector bundle of rank three over a complex threefold. The Schur forms in rank three correspond to the partitions
\[
(0,0,0),\ (1,0,0),\ (2,0,0),\ (1,1,0),\ (3,0,0),\ (2,1,0),\ (1,1,1).
\]
By \cite[Corollary~3.5]{Fin}, the positivity of \(\mathrm{D}_W\circ H^{\otimes 3}\circ \mathrm{D}_V^*\) implies that the top Chern form
\[
P_{(3,0,0)}\bigl(c(E,h^E)\bigr)=c_3(E,h^E)
\]
is weakly positive. Moreover, G\"uler \cite[Theorem~1.1]{MR2932990} proved that all signed Segre forms are weakly positive; in particular,
\[
P_{(1,0,0)}\bigl(c(E,h^E)\bigr),\quad
P_{(1,1,0)}\bigl(c(E,h^E)\bigr),\quad
P_{(1,1,1)}\bigl(c(E,h^E)\bigr)
\]
are weakly positive. Finally, Proposition~\ref{prop:P210-positive} shows that
\[
P_{(2,0,0)}\bigl(c(E,h^E)\bigr)\quad\text{and}\quad
P_{(2,1,0)}\bigl(c(E,h^E)\bigr)
\]
are weakly positive. (Here \(P_{(0,0,0)}\bigl(c(E,h^E)\bigr)=1\).)
Therefore, all Schur forms in rank three are weakly positive, and the proof is complete.
\end{proof}

\bibliographystyle{alpha}
\bibliography{posi}

\end{CJK}
\end{document}